\title{Odd Covers of Graphs}
\author{Calum Buchanan\thanks{{\tt calum.buchanan@uvm.edu}, University of Vermont, Burlington, VT} \and Alexander Clifton\thanks{{\tt aclift2@emory.edu}, Emory University, Atlanta, GA} \and Eric Culver\thanks{{\tt eric.culver@ucdenver.edu}, University of Colorado Denver, Denver, CO} \and Jiaxi Nie\thanks{{\tt jin019@ucsd.edu}, University of California San Diego, San Diego, CA} \and Jason O'Neill\thanks{{\tt jmoneill@ucsd.edu}, University of California San Diego, San Diego, CA} \and Puck Rombach\thanks{{\tt puck.rombach@uvm.edu}, University of Vermont, Burlington, VT} \and Mei Yin\thanks{{\tt mei.yin@du.edu}, University of Denver, Denver, CO}}
\date{\today}
\newcounter{problem}
\newtheorem{theorem}{Theorem}[section]
\newtheorem{lemma}[theorem]{Lemma}
\newtheorem{cor}[theorem]{Corollary}
\newtheorem{prob}[problem]{Problem}
\newtheorem{prop}[theorem]{Proposition}
\newtheorem{conj}[theorem]{Conjecture}
\theoremstyle{definition}
\newtheorem{definition}[theorem]{Definition}
\tikzstyle{vertex}=[draw,thick,fill=white,circle,inner sep=2pt]
\DeclareMathOperator{\bp}{bp}
\DeclareMathOperator{\bc}{bc}
\DeclareMathOperator{\spn}{span}
\newcommand{\blank}{\varepsilon}
\begin{document}

\maketitle

\begin{abstract}
Given a finite simple graph $G$, an {\em odd cover of $G$} is a collection of complete bipartite graphs, or bicliques, in which each edge of $G$ appears in an odd number of bicliques and each non-edge of $G$ appears in an even number of bicliques. We denote the minimum cardinality of an odd cover of $G$ by $b_2(G)$ and prove that $b_2(G)$ is bounded below by half of the rank over $\mathbb{F}_2$ of the adjacency matrix of $G$. We show that this lower bound is tight in the case when $G$ is a bipartite graph and almost tight when $G$ is an odd cycle. However, we also present an infinite family of graphs which shows that this lower bound can be arbitrarily far away from $b_2(G)$. 

Babai and Frankl (1992) proposed the ``odd cover problem," which in our language is equivalent to determining $b_2(K_n)$. Radhakrishnan, Sen, and Vishwanathan (2000) determined $b_2(K_n)$ for an infinite but density zero subset of positive integers $n$. In this paper, we determine $b_2(K_n)$ for a density $3/8$ subset of the positive integers.

\medskip
\noindent {\bf Keywords:} odd cover problem, complete bipartite graph, Graham-Pollak, bipartite subgraph complementation
\end{abstract}

\section{Introduction}
Let $G = (V, E)$ be a finite simple graph. An {\em odd cover of $G$} is a collection of bicliques, or complete bipartite graphs, on subsets of the vertex set $V$ in which two vertices are adjacent in an odd number of bicliques if and only if they are adjacent in $G$. We note that an odd cover always exists, since trivially, the collection of bicliques with partite sets $\{u\}$ and $\{v\}$ for each pair of adjacent vertices $u$ and $v$ in $G$ constitutes one such cover. It turns out that a collection of bicliques form an odd cover of $G$ if and only if the symmetric difference of their edge sets is $E(G)$. The minimum cardinality of an odd cover of $G$ is denoted $b_2(G)$.

The problem of determining $b_2(G)$ was posed by Neil de Beaudrap~\cite{76043}. Counterpart formulations of odd cover exist in the literature. In \cite{KAMINSKI20092747}, an operation termed a \emph{bipartite subgraph complementation} was introduced, which complements the edges between two disjoint subsets of vertices of the graph. Under this terminology, finding the minimum cardinality of an odd cover of a graph $G$ on $n$ vertices translates to identifying the minimum number of bipartite subgraph complementations needed to obtain $G$ from the empty graph on $n$ vertices. This is related to the problem studied in \cite{BPR} of finding the minimum number of {\em subgraph complementations}, the operation of complementing the edge set of an induced subgraph, needed to obtain $G$ from the empty graph.

There are two notions closely related to odd cover: biclique partition and biclique covering, both of which have been widely studied. A biclique partition of $G$ is a collection of edge-disjoint complete bipartite subgraphs of $G$ whose edges partition the edge set of $G$. The minimum cardinality of a biclique partition of $G$ is denoted $\bp(G)$. A biclique covering of $G$ is a collection of complete bipartite subgraphs of $G$ such that every edge of $G$ appears at least once. The minimum cardinality of a biclique covering of $G$ is denoted $\bc(G)$. It is not hard to see that every biclique partition is both a biclique covering and an odd cover, which implies that $\bc(G) \leq \bp(G)$ and $b_2(G) \leq \bp(G)$, but there are biclique coverings and odd covers which are not biclique partitions. Furthermore, there is an important distinction between each of these ideas and odd covers, as we can include non-edges of $G$ in an odd cover but not in a biclique partition or a biclique covering.

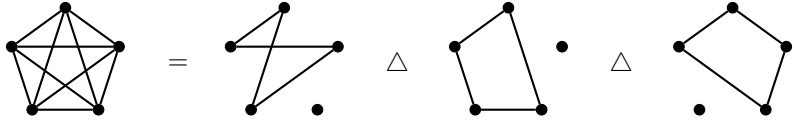
\begin{figure}
\begin{center}
\begin{tikzpicture}
[every node/.style={circle, draw=black!100, fill=black!100, inner sep=0pt, minimum size=4pt},
every edge/.style={draw, black!100, thick},
scale=.75]

\foreach \i in {1,...,5}
{
\node (\i) at (\i*360/5+18:1) {};
}
\draw[black!100,thick] (1) -- (2) -- (3) -- (4)-- (5)-- (1);
\draw[black!100,thick] (1) -- (3);
\draw[black!100,thick] (1) -- (4);
\draw[black!100,thick] (2) -- (4);
\draw[black!100,thick] (2) -- (5);
\draw[black!100,thick] (3) -- (5);

\node[draw=none,fill=none] at (2,0) {$=$};
\end{tikzpicture}
\quad
\begin{tikzpicture}
[every node/.style={circle, draw=black!100, fill=black!100, inner sep=0pt, minimum size=4pt},
every edge/.style={draw, black!100, thick},
scale=.75]

\foreach \i in {1,...,5}
{
\node (\i) at (\i*360/5+18:1) {};
}
\draw[black!100,thick] (1) -- (2);
\draw[black!100,thick] (1) -- (3);
\draw[black!100,thick] (2) -- (5);
\draw[black!100,thick] (3) -- (5);

\node[draw=none,fill=none] at (2,0) {$\triangle$};
\end{tikzpicture}
\quad
\begin{tikzpicture}
[every node/.style={circle, draw=black!100, fill=black!100, inner sep=0pt, minimum size=4pt},
every edge/.style={draw, black!100, thick},
scale=.75]

\foreach \i in {1,...,5}
{
\node (\i) at (\i*360/5+18:1) {};
}
\draw[black!100,thick] (1) -- (2) -- (3) -- (4) -- (1);

\node[draw=none,fill=none] at (2,0) {$\triangle$};
\end{tikzpicture}
\quad
\begin{tikzpicture}
[every node/.style={circle, draw=black!100, fill=black!100, inner sep=0pt, minimum size=4pt},
every edge/.style={draw, black!100, thick},
scale=.75]

\foreach \i in {1,...,5}
{
\node (\i) at (\i*360/5+18:1) {};
}
\draw[black!100,thick] (1) -- (2) -- (4) -- (5) -- (1);
\end{tikzpicture}
\end{center}

\caption{An odd cover of $K_5$. See Theorem \ref{clique}.}
\label{fig:K5}
\end{figure}

\begin{figure}
\begin{center}
\begin{tikzpicture}
[every node/.style={circle, draw=black!100, fill=black!100, inner sep=0pt, minimum size=4pt},
every edge/.style={draw, black!100, thick},
scale=.75]

\foreach \i in {1,...,5}
{
\node (\i) at (\i*360/5+18:1) {};
}
\draw[black!100,thick] (1) -- (2) -- (3) -- (4)-- (5)-- (1);

\node[draw=none,fill=none] at (2,0) {$=$};
\end{tikzpicture}
\quad
\begin{tikzpicture}
[every node/.style={circle, draw=black!100, fill=black!100, inner sep=0pt, minimum size=4pt},
every edge/.style={draw, black!100, thick},
scale=.75]

\foreach \i in {1,...,5}
{
\node (\i) at (\i*360/5+18:1) {};
}
\draw[black!100,thick] (2) -- (3) -- (4);

\node[draw=none,fill=none] at (2,0) {$\triangle$};
\end{tikzpicture}
\quad
\begin{tikzpicture}
[every node/.style={circle, draw=black!100, fill=black!100, inner sep=0pt, minimum size=4pt},
every edge/.style={draw, black!100, thick},
scale=.75]

\foreach \i in {1,...,5}
{
\node (\i) at (\i*360/5+18:1) {};
}
\draw[black!100,thick] (1) -- (2);
\draw[black!100,thick] (1) -- (5);

\node[draw=none,fill=none] at (2,0) {$\triangle$};
\end{tikzpicture}
\quad
\begin{tikzpicture}
[every node/.style={circle, draw=black!100, fill=black!100, inner sep=0pt, minimum size=4pt},
every edge/.style={draw, black!100, thick},
scale=.75]

\foreach \i in {1,...,5}
{
\node (\i) at (\i*360/5+18:1) {};
}
\draw[black!100,thick] (4) -- (5);
\end{tikzpicture}
\end{center}

\caption{An odd cover of $C_5$. See Theorem \ref{cycle-odd}.}
\label{fig:C5}
\end{figure}

\begin{figure}
\begin{center}
\begin{tikzpicture}
[every node/.style={circle, draw=black!100, fill=black!100, inner sep=0pt, minimum size=4pt},
every edge/.style={draw, black!100, thick},
scale=.75]

\foreach \i in {1,...,6}
{
\node (\i) at (\i*360/6:1) {};
}
\draw[black!100,thick] (1) -- (2) -- (3) -- (4)-- (5)-- (6) -- (1) ;

\node[draw=none,fill=none] at (2,0) {$=$};
\end{tikzpicture}
\quad
\begin{tikzpicture}
[every node/.style={circle, draw=black!100, fill=black!100, inner sep=0pt, minimum size=4pt},
every edge/.style={draw, black!100, thick},
scale=.75]

\foreach \i in {1,...,6}
{
\node (\i) at (\i*360/6:1) {};
}
\draw[black!100,thick] (1) -- (2) -- (3) -- (6) -- (1);

\node[draw=none,fill=none] at (2,0) {$\triangle$};
\end{tikzpicture}
\quad
\begin{tikzpicture}
[every node/.style={circle, draw=black!100, fill=black!100, inner sep=0pt, minimum size=4pt},
every edge/.style={draw, black!100, thick},
scale=.75]

\foreach \i in {1,...,6}
{
\node (\i) at (\i*360/6:1) {};
}
\draw[black!100,thick] (3) -- (4) -- (5) -- (6) -- (3);
\end{tikzpicture}
\end{center}

\caption{An odd cover of $C_6$. See Corollary \ref{cycle-even}.}
\label{fig:C6}
\end{figure}
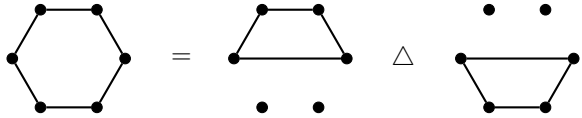

There are graphs for which $b_2(G)=\bp(G)=\bc(G)$, such as $C_5$ as depicted in Figure \ref{fig:C5}. But more interestingly, there are graphs for which $b_2(G)=\bc(G)<\bp(G)$, such as $K_5$ as depicted in Figure \ref{fig:K5}, as well as graphs for which $b_2(G)<\bc(G)=\bp(G)$, such as $C_6$ as depicted in Figure \ref{fig:C6}. Several other comparison possibilities exist between these three interconnected concepts.

For complete graphs, as we will show in Theorem \ref{clique}, $b_2(K_n)$ is bounded below by $\lceil n/2 \rceil$ and above by $\lceil n/2 \rceil +1$. The lower bound is established via evaluating the $2$-rank as in Proposition \ref{prop:ranklower} and the upper bound is established from constructing explicit odd covers. Further, there are some special $n$ values where the exact number for $b_2(K_n)$ is identified. (See the next paragraph for the significance of these special values.) It is known that asymptotically, $\bc(K_n)=\lceil \log_2 n \rceil$ and $\bp(K_n)=n-1$, thus $\bc(K_n) \lesssim b_2(K_n) \lesssim \bp(K_n)$. For odd cycles, as we will show in Theorem \ref{cycle-odd}, $b_2(C_n)=(n+1)/2$, and this coincides with the known asymptotic results for $\bc(C_n)$ and $\bp(C_n)$. The upper bound comes from a straightforward biclique partition, while the lower bound is achieved from performing rank analysis on the associated adjacency matrices. Contrarily, for even cycles, using Theorem \ref{thm:bipartite} and recognizing that an even cycle $C_n$ is the line graph for itself and thus has rank $n-2$ over  $\mathbb{F}_2$, we will show in Corollary \ref{cycle-even} that $b_2(C_n)=n/2-1$ whereas $\bc(C_n)=\bp(C_n)=n/2$ for $n\geq 6$ ($\bc(C_4)=\bp(C_4)=1$ is a special case).

For paths, using Theorem \ref{thm:bipartite} and recognizing that a path $P_n$ is the line graph for $P_{n+1}$ and thus has $2$-rank $n-1$ for $n$ odd and $n$ for $n$ even, we will show in Corollary \ref{path} that, independent of the parity of $n$, $b_2(P_n)=\lfloor n/2 \rfloor$, and this coincides with the known asymptotic results for $\bc(P_n)$ and $\bp(P_n)$.

Our result concerning complete graphs in particular may be regarded as an extension to the famous Graham-Pollak \cite{GP} theorem in algebraic graph theory, which states that $\bp(K_n)=n-1$. Instead of requiring that the bicliques be edge-disjoint, the odd cover problem only asks for each edge of the complete graph to be covered an odd number of times, and so as argued before, $b_2(K_n)$ could be asymptotically smaller than $\bp(K_n)$. This odd cover problem for complete graphs was considered earlier by Babai and Frankl \cite{babai2020}, who observed a lower bound of $\lfloor n/2 \rfloor$ for $b_2(K_n)$. Further progress was made by Radhakrishnan, Sen and Vishwanathan~\cite{radhakrishnan2000}, who determined $b_2(K_n) = \lceil n/2 \rceil$ when $n=2(q^2+q+1)$ and there exists a projective plane of order $q$ with $q=3 \mod{4}$, and also $b_2(K_{2n})=n$ when there exists an $n \times n$ Hadamard matrix. We note that the latter such $n$ are only known to comprise a density zero set of positive integers. (See \cite{Launey} for best known results on the density of integers for which there exists a Hadamard matrix.) By contrast, our explicit odd cover constructions show that $b_2(K_n)=\lceil n/2 \rceil$ when $n=8k$ or $8k \pm 1$ for some positive integer $k$, and hence solve the odd cover problem on complete graphs on a density $3/8$ portion of positive integers.

This paper is organized as follows. Section \ref{defnot} introduces definitions and notations. Section \ref{sec:oddcovers} presents a general lower bound on the minimum cardinality of an odd cover of a graph. We provide two algebraic perspectives on odd covers, both of which lead to this same lower bound, and one to a general upper bound. A family of graphs whose minimum odd cover cardinality is bounded away from this general lower bound is also identified. The later sections are concerned with finding the minimum odd cover cardinality for different classes of graphs and comparing it against the general lower bound. Section \ref{sec:bipartite} shows that the general lower bound is achieved for all bipartite graphs, while Section \ref{sec:odd cycles} shows that the general lower bound is off by one in the case of odd cycles. Section \ref{complete} shows that the difference between $b_2(K_n)$ and the general lower bound is within 2 for all $n$ and establishes the exact value of $b_2(K_n)$ when $n = 8k$ or $n = 8k \pm 1$ for some positive integer $k$. Section \ref{conclusion} gives some concluding remarks and open questions.

\section{Definitions and notations}
\label{defnot}
All graphs considered in this paper are finite and simple. The vertex set of a graph $G$ is denoted by $V(G)$ and the edge set by $E(G)$, or by $V$ and $E$, respectively, when the graph $G$ is evident from context. The number of vertices of $G$ is denoted by $|G|$ and the number of edges by $\|G\|$, or by $n$ and $m$ when $G$ is evident from context. The adjacency matrix of $G$ is denoted by $A(G)$. Complete graphs are denoted by $K_n$, paths by $P_n$, and cycles by $C_n$, where $n$ is the number of vertices in each case. Complete bipartite graphs, or bicliques, are denoted by $K_{a,b}$, where $a$ and $b$ are the numbers of vertices in each partite set. When it is evident from context, a biclique may be denoted by a pair $(X,Y)$, where $X$ and $Y$ are the partite sets of the biclique. Complete tripartite graphs, or tricliques, are similarly denoted by $K_{a,b,c}$, or by $(X,Y,Z)$. The disjoint union of graphs $G$ and $H$ is denoted by $G+H$, and the disjoint union of $k$ copies of $G$ by $kG$. The {\em symmetric difference} of two graphs $G$ and $H$ on the same vertex set $V$ is the graph $G \triangle H = (V, E(G) \triangle E(H))$, {\em i.e.}, whose edge set is the symmetric difference of $E(G)$ and $E(H)$. In this paper, we generalize this definition by considering symmetric differences of graphs on subsets of a vertex set $V$, defined in the same way. We denote by $N(v)$ the {\em open neighborhood} of a vertex $v$ of $G$, that is, $N(v) = \{u \mid uv \in E\}$, and by $N[v]$ the {\em closed neighborhood} of $v$, that is, $N[v] = N(v) \cup \{v\}$. Two vertices $u$ and $v$ of $G$ are said to be {\em twins} if $N(u) = N(v)$, and are said to be {\em adjacent twins} if $N[u] = N[v]$. The {\em degree} of $v$ is $|N(v)|$, denoted by $d(v)$. When the graph $G$ in question is not evident, we use the notations $N_G(v)$, $N_G[v]$, and $d_G(v)$, respectively. The induced subgraph of $G$ on the subset of vertices $V \setminus S$ is denoted by $G - S$, or by $G[V\setminus S]$, and the graph obtained by deleting a vertex $v$ or an edge $e$ from $G$ is denoted by $G - v$ or $G - e$, respectively.

\section{Odd covers of graphs}\label{sec:oddcovers}

We begin with a general lower bound on $b_2(G)$. We define the {\em $2$-rank} of a graph $G$, denoted $r_2(G)$, to be the rank of its adjacency matrix over $\mathbb{F}_2$, the finite field of order $2$. We will see that $2b_2(G) \geq r_2(G)$ in two ways, each of which yields a different algebraic perspective on odd covers of graphs.

\begin{prop}\label{prop:ranklower}
For any graph $G$,
\begin{equation*}\label{eq:ranklb}
 b_2(G) \geq \frac{r_2(G)}{2}   
\end{equation*}
\end{prop}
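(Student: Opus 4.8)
The plan is to reinterpret the odd cover condition as a matrix identity over $\mathbb{F}_2$ and then bound $\operatorname{rank} A(G)$ using the subadditivity of rank applied to a sum of low-rank matrices. The whole argument rests on translating the combinatorial statement ``the symmetric difference of the edge sets is $E(G)$'' into the language of $\mathbb{F}_2$-linear algebra.

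First I would record the dictionary between odd covers and matrices. By the observation in the introduction, a collection of bicliques $(X_1, Y_1), \ldots, (X_k, Y_k)$ forms an odd cover of $G$ precisely when the symmetric difference of their edge sets equals $E(G)$. Since adding adjacency matrices over $\mathbb{F}_2$ realizes symmetric difference of edge sets, this is equivalent to
\begin{equation*}
A(G) = \sum_{i=1}^{k} A(K_{X_i, Y_i}) \pmod 2.
\end{equation*}
Next I would compute the rank of a single biclique's adjacency matrix. Writing $\mathbf{1}_S \in \mathbb{F}_2^n$ for the characteristic vector of a vertex subset $S$, the biclique $(X,Y)$ has adjacency matrix $A(K_{X,Y}) = \mathbf{1}_X \mathbf{1}_Y^\top + \mathbf{1}_Y \mathbf{1}_X^\top$: the $(u,v)$ entry is $1$ exactly when one of $u,v$ lies in $X$ and the other in $Y$, while the diagonal vanishes over $\mathbb{F}_2$ because each such term contributes $2$. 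As a sum of two rank-one matrices, this has rank at most $2$.

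Finally I would combine these two facts. Rank is subadditive over any field, so the matrix identity yields
\begin{equation*}
r_2(G) = \operatorname{rank} A(G) \leq \sum_{i=1}^{k} \operatorname{rank} A(K_{X_i, Y_i}) \leq 2k.
\end{equation*}
Because $k$ was the size of an arbitrary odd cover, taking $k = b_2(G)$ gives $r_2(G) \leq 2 b_2(G)$, which is the claim after rearranging.

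I do not expect a serious obstacle, since every step is routine once the matrix dictionary is in place; the only point requiring care is verifying that $\mathbf{1}_X \mathbf{1}_Y^\top + \mathbf{1}_Y \mathbf{1}_X^\top$ reproduces a symmetric, zero-diagonal adjacency matrix over $\mathbb{F}_2$, including the cancellation on the diagonal. An alternative route, which would give the second algebraic perspective promised in the surrounding text, is to assign to each vertex $v$ the pair of vectors recording which parts $X_i$ and $Y_i$ contain it, assemble these into a $2k \times n$ matrix $Z$, and write $A(G) = Z^\top J Z$ for a fixed symmetric form $J$; then $\operatorname{rank} A(G) \leq \operatorname{rank} Z \leq 2k$ follows immediately.
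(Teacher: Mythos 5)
Your proposal is correct and follows essentially the same route as the paper: write $A(G)$ as the mod-$2$ sum of the bicliques' adjacency matrices (padded to all of $V(G)$), note each such matrix has rank at most $2$ over $\mathbb{F}_2$, and apply subadditivity of rank. Your explicit verification that $A(K_{X,Y}) = \mathbf{1}_X\mathbf{1}_Y^\top + \mathbf{1}_Y\mathbf{1}_X^\top$ has rank at most $2$, and your closing remark about the factorization $A(G) = M(\oplus_1^k H_2)M^T$, merely make explicit the two perspectives the paper itself records.
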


\begin{proof}
Let $\mathscr{B} = \{B_1, \ldots, B_k\}$ denote a minimum odd cover of $G$. To each biclique $B_i$ in $\mathscr{B}$, we add isolated vertices to obtain a graph $H_i$ on $V(G)$. It follows from the definition of an odd cover that $A(G)=\sum_{i=1}^k A(H_i) \pmod 2$. Since a biclique has rank at most 2 over any field, and since matrix rank is subadditive, we see that $r_2(G) \leq 2k = 2b_2(G)$. This completes the proof.
\end{proof}

There is another algebraic interpretation of an odd cover, from which we can obtain the same lower bound. We introduce it here as we will use a similar argument at the end of this section to derive an upper bound $b_2(G) \leq r_2(G)$ and introduce a family of graphs for which $b_2(G)$ is bounded away from $r_2(G) / 2$ asymptotically. Let $G$ be a graph with vertices enumerated $v_1, v_2, \ldots, v_n$, and let $\mathscr{B} = \{(X_1, Y_1), \ldots, (X_k,Y_k)\}$ be an odd cover of $G$. Assign to each vertex $v_i$ an incidence vector $(x_{i,1}, y_{i,1}, \ldots, x_{i,k}, y_{i,k}) \in \mathbb{F}_2^{2k}$ whose entries are in pairs $x_{i,j}, y_{i,j}$, where $x_{i,j}$ is $1$ if $v_i \in X_j$ and is $0$ otherwise, and $y_{i,j}$ is $1$ if $v_i \in Y_j$ and is $0$ otherwise. That is, the pair $(x_{i,j},y_{i,j})$ associated to the $i$th vertex and the $j$th biclique is $(1,0)$ if $v_i \in X_j$, is $(0,1)$ if $v_i \in Y_j$, and is $(0,0)$ otherwise. Let $M$ denote the $n \times 2k$ matrix whose rows are the incidence vectors for the vertices of $G$, and let $H_2 = \left(\begin{smallmatrix} 0 & 1 \\ 1 & 0 \end{smallmatrix} \right)$. We consider the $n\times n$ matrix $A = (a_{i,j})$ given by
\begin{equation}\label{eq:MoplusMt}
A = M (\oplus_1^k H_2) M^T,
\end{equation}
where matrix multiplication is taken over $\mathbb{F}_2$. Notice that the entry $a_{i,j}$ is the sum $\sum_{s = 1}^k (x_{is}y_{js} + x_{js}y_{is})$, of which the $s$th summand is 1 if $v_i$ and $v_j$ are in differing partite sets of the biclique $(X_s, Y_s)$, and is 0 otherwise. Thus, $a_{i,j}$ is 1 if and only if $v_i$ and $v_j$ are in differing partite sets of an odd number of bicliques. In other words,
\[A = A(G).\]
It is not hard to see that $r_2(M (\oplus_1^k H_2) M^T) \leq r_2(M) \leq 2k$, from which we again obtain the lower bound in Proposition~\ref{prop:ranklower}. We will revisit this idea at the end of this section.

We now introduce another interpretation of odd covers. We will examine a family of graphs $\{B_k \mid k \in \mathbb{N}\}$ such that each $B_k$ contains, as induced subgraphs, all of the twin-free graphs $G$ with $b_2(G) \leq k$. It is not hard to see that twin vertices do not affect $b_2(G)$. If $G$ is a graph with twin vertices $u$ and $v$, then we can obtain a minimum odd cover of $G$ from one of $G-u$ by including $u$ in every partite set in which $v$ appears. This fact also follows from Lemma~\ref{lem:nooddoddextension}.

\begin{definition}\label{defn:Bk}
    The graph $B_k$ has vertex set consisting of all strings of length $k$ with entries in $\{0,1, \blank\}$, where vertices $u$ and $v$ are adjacent if and only if the number of places where one contains $0$ and the other contains $1$ is odd. 
\end{definition}

\begin{prop}\label{prop:Bk}
Any twin-free graph $G$ with $b_2(G) \leq k$ is an induced subgraph of $B_k$.
\end{prop}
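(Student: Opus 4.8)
The plan is to reuse the incidence-vector correspondence developed just above to encode each vertex of $G$ as a vertex of $B_k$, and then to argue that this encoding is injective precisely because $G$ is twin-free. First I would fix a minimum odd cover $\mathscr{B} = \{(X_1, Y_1), \ldots, (X_{b_2(G)}, Y_{b_2(G)})\}$ of $G$ and pad it with empty bicliques $(\emptyset, \emptyset)$ so that it consists of exactly $k$ pairs; this changes neither the edges nor the non-edges that are covered. Since the two partite sets of a biclique are disjoint, each vertex $v$ lies in at most one of $X_j, Y_j$ for each $j$, so I can define a map $\phi \colon V(G) \to \{0, 1, \blank\}^k$ by letting the $j$th entry of $\phi(v)$ be $0$ if $v \in X_j$, be $1$ if $v \in Y_j$, and be $\blank$ otherwise.

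Next I would verify that $\phi$ preserves adjacency and non-adjacency. Two vertices $u$ and $v$ lie in differing partite sets of the $j$th biclique exactly when one of $\phi(u), \phi(v)$ has a $0$ in coordinate $j$ while the other has a $1$; this is the same event analyzed in the incidence-vector computation surrounding~\eqref{eq:MoplusMt}. Hence the number of such coordinates is odd if and only if $u$ and $v$ are covered an odd number of times, which, since $\mathscr{B}$ is an odd cover, happens if and only if $uv \in E(G)$. By Definition~\ref{defn:Bk} this is exactly the condition for $\phi(u)$ and $\phi(v)$ to be adjacent in $B_k$, so $\phi$ sends edges of $G$ to edges of $B_k$ and non-edges to non-edges.

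It then remains to show that $\phi$ is injective, and this is the one place where the twin-free hypothesis enters; it is the real content of the statement, since the adjacency-preservation step is essentially immediate from the preceding discussion. Suppose $\phi(u) = \phi(v)$ for distinct vertices $u, v$. Because their strings agree in every coordinate, they never occupy opposite partite sets of a biclique, so in particular the count of differing coordinates is $0$ and they are nonadjacent in $G$. Moreover, for any third vertex $w$, the adjacency of $w$ to $u$ and to $v$ is determined solely by comparing $\phi(w)$ against the common string $\phi(u) = \phi(v)$, so $w \in N(u)$ if and only if $w \in N(v)$. Combining this with $u \notin N(v)$ and $v \notin N(u)$ yields $N(u) = N(v)$, contradicting the assumption that $G$ is twin-free. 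Therefore $\phi$ is injective, and together with the previous paragraph it realizes $G$ as the induced subgraph of $B_k$ on the vertex set $\phi(V(G))$, completing the argument.
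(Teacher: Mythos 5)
Your proof is correct and follows essentially the same route as the paper's: encode each vertex as a word in $\{0,1,\blank\}^k$ via its partite-set membership, check that edges and non-edges are preserved by the parity condition in Definition~\ref{defn:Bk}, and use twin-freeness to get injectivity. Your write-up is slightly more explicit in two spots the paper glosses over---padding a minimum odd cover with empty bicliques to reach exactly $k$ bicliques, and verifying $u \notin N(v)$ so that equal words really force $N(u) = N(v)$---but these are refinements of the same argument, not a different approach.
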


\begin{proof}
We consider a certain encoding of odd covers. Let $\{(X_1, Y_1), \ldots, (X_k, Y_k)\}$ be an odd cover of $G$, where each biclique $i$ is indicated by its two (possibly empty) partite sets $X_i$ and $Y_i$. We assign to each vertex $v$ a word from $\{0, 1, \blank\}^k$ by having the $i$th place contain a $0$ if $v \in X_i$, a $1$ if $v \in Y_i$, and $\blank$ if $v \not\in X_i \cup Y_i$.  Since, by definition, $X_i \cap Y_i = \emptyset$, this is well-defined. If two vertices, $v, u$ are assigned the same word by this scheme, then they are in the same partite sets of all the bicliques in the odd cover, and therefore are necessarily twins. Since $G$ is twin-free, all the assigned words are unique. Therefore, this assignment can be considered an injective mapping from the vertices of $G$ to the vertices of $B_k$. Also note that if vertices $v$ and $u$ are adjacent, then they are in opposite partite sets an odd number of times (by definition of an odd cover). Thus, the words assigned to $v$ and $u$ will have the property that one contains $0$ and the other contains $1$ in an odd number of places. Therefore, edges of $G$ are mapped to edges of $B_k$. In addition, if vertices $v$ and $u$ are not adjacent, then they are in opposite partite sets an even number of times, and so the words assigned to $v$ and $u$ will have the property that one contains $0$ and one contains $1$ in an even number of places. Therefore, non-edges of $G$ are mapped to non-edges of $B_k$. This means that there is an induced copy of $G$ in $B_k$. 
\end{proof}

We now have multiple ways to encode an odd cover. These notions are actually related; if we replace the entries $0$, $1$, and $\blank$ in the strings which denote the vertices of $B_k$ with pairs $(1,0)$, $(0,1)$, and $(0,0)$, respectively, to obtain a vector of length $2k$ for each vertex of $B_k$, then the matrix $M$ with rows given by these vectors has the property that $A(B_k) = M (\oplus_1^k H_2) M^T$, where $H_2 = \left(\begin{smallmatrix} 0 & 1 \\ 1 & 0 \end{smallmatrix}\right)$, as in equation~\eqref{eq:MoplusMt}.

There is an odd cover for any graph $G$, and thus there is always a matrix $M$ such that $A(G) = M (\oplus_1^s H_2) M^T$, where $s$ is the cardinality of the odd cover. It is not always the case that $b_2(G) = r_2(G) / 2$, so the matrix $M$ that we obtain from an odd cover does not always have $r_2(G)$ columns. However, a corollary to Theorem~2.6 in~\cite{friedland1991quadratic} provides us a partial converse.

\begin{prop}[\cite{friedland1991quadratic}]\label{prop:friedland}
Let $A$ be a symmetric $n \times n$ matrix over $\mathbb{F}_2$ of rank $r$ such that every diagonal entry of $A$ is 0. Then there exists an $n\times r$ matrix $M$ of rank $r$ such that
\[A = M (\oplus_1^k H_2) M^T,\]
where $r = 2k$ and $H_2 = \left(\begin{smallmatrix} 0 & 1 \\ 1 & 0 \end{smallmatrix}\right)$.
\end{prop}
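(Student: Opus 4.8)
The plan is to recognize the factorization $A = M(\oplus_1^k H_2)M^T$ as the assertion that $A$ is congruent to the standard symplectic form padded with zeros, and then to read off $M$ directly from this congruence. The starting observation is that, over $\mathbb{F}_2$, a symmetric matrix with zero diagonal is exactly an \emph{alternating} matrix: for any $x \in \mathbb{F}_2^n$ one computes $x^T A x = \sum_i A_{ii} x_i^2 + \sum_{i<j}(A_{ij}+A_{ji})x_i x_j = \sum_i A_{ii} x_i$, since $A_{ij}=A_{ji}$ forces $A_{ij}+A_{ji}=0$ and $x_i^2 = x_i$; hence $x^TAx=0$ for all $x$ if and only if every diagonal entry vanishes. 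Thus $A$ is the Gram matrix of an alternating bilinear form $\beta(x,y)=x^TAy$ on $\mathbb{F}_2^n$, which is precisely the setting in which a symplectic normal form is available.

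First I would invoke the normal form for alternating matrices, which is the content of Theorem~2.6 in \cite{friedland1991quadratic}: there is an invertible matrix $P$ over $\mathbb{F}_2$ such that
\[
P^T A P = \begin{pmatrix} \oplus_1^k H_2 & 0 \\ 0 & 0_{n-2k} \end{pmatrix}.
\]
Since congruence by an invertible matrix preserves rank over a field, comparing ranks of the two sides forces $r = 2k$; in particular $r$ is even. Setting $Q = (P^{-1})^T$ and letting $E = \left(\begin{smallmatrix} I_r \\ 0 \end{smallmatrix}\right)$ be the $n \times r$ matrix consisting of the first $r$ columns of the identity, one has $\left(\begin{smallmatrix} \oplus_1^k H_2 & 0 \\ 0 & 0 \end{smallmatrix}\right) = E(\oplus_1^k H_2)E^T$, and therefore $A = Q E (\oplus_1^k H_2) E^T Q^T = M (\oplus_1^k H_2) M^T$ with $M := QE$. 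Because $Q$ is invertible and $E$ has full column rank $r$, the product $M$ is an $n \times r$ matrix of rank $r$, exactly as required.

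The heart of the argument, and the step I expect to require the most care, is the congruence normal form itself, i.e. the symplectic basis theorem over $\mathbb{F}_2$ underlying Theorem~2.6. If I were to reprove it rather than cite it, I would argue by induction on $n$: if $\beta$ is identically zero then $A = 0$ and $r = 0$; otherwise choose $u, v$ with $\beta(u,v)=1$, so that the plane $H = \spn\{u,v\}$ carries a nondegenerate (hyperbolic) restriction of $\beta$ with Gram matrix $H_2$. Nondegeneracy of $\beta|_H$ yields the internal direct sum $V = H \oplus H^\perp$, and $\beta|_{H^\perp}$ is again alternating on a space of dimension $n-2$, to which the inductive hypothesis applies. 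Splitting off one hyperbolic plane at a time produces the block form above, with the radical of $\beta$ accounting for the trailing zero block. The only subtlety is verifying the decomposition $V = H \oplus H^\perp$ at each stage, which follows from the nondegeneracy of $\beta$ on $H$; once this is established, assembling the change-of-basis matrix $P$ and extracting $M = (P^{-1})^T E$ is routine.
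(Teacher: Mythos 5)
Your proof is correct, and it takes the route the paper itself intends: the paper states this proposition without proof, citing it as a corollary of Theorem~2.6 of \cite{friedland1991quadratic}, which is precisely the alternating normal form $P^TAP = \left(\oplus_1^k H_2\right) \oplus 0_{n-2k}$ you invoke, and your extraction of $M = (P^{-1})^T E$ with $E$ the first $r$ columns of $I_n$ is the routine step the paper leaves implicit. Your preliminary observation that zero diagonal forces the form to be alternating over $\mathbb{F}_2$ (via $x^TAx = \sum_i A_{ii}x_i$), and your inductive sketch splitting off hyperbolic planes using $V = H \oplus H^\perp$, are both standard and correct, so the argument is complete even if one does not take Theorem~2.6 as a black box.
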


It follows from Proposition~\ref{prop:friedland} that, for any $n$-vertex graph $G$ with 2-rank $r = 2k$, there exists an $n\times r$ matrix $M$ of rank $r$ satisfying $A(G) = M (\oplus_1^k H_2) M^T$. We have seen such a product in equation~\eqref{eq:MoplusMt}, and we observed that if the entries of the $i$th row of $M$ are paired $(x_{i,1}, y_{i,1}, \ldots, x_{i,k}, y_{i,k})$ for each $i \in [n]$, and if no pair $(x_{i,j},y_{i,j})$ is $(1,1)$, then $M$ corresponds to an odd cover of $G$ of cardinality $k$. If all four of $(0,0)$, $(0,1)$, $(1,0)$, and $(1,1)$ appear as pairs $(x_{i,j}, y_{i,j})$ in rows of $M$, then we can not associate to $M$ an odd cover of $G$. However, we can still interpret this case combinatorially by allowing complete tripartite graphs in our odd covers.

Let $G$ be a graph. We can generalize the notion of an odd cover as follows. Consider a collection of complete tripartite graphs, or tricliques, $\mathscr{T} = \{(X_1, Y_1, Z_1), \ldots, (X_k, Y_k, Z_k)\}$ on subsets of $V(G)$ in which two vertices are adjacent in an odd number of tricliques if and only if they are adjacent in $G$. That is, $E(G)$ is the symmetric difference of the edge sets of the tricliques in $\mathscr{T}$. (This notion is introduced in \cite{BPR} under the name tripartite subgraph complementation.)

We can encode such a collection of tricliques $\mathscr{T}$ in a familiar way. To each vertex $v_i$ of $G$, assign an incidence vector $(x_{i,1}, y_{i,1}, \ldots, x_{i,k}, y_{i,k})$ defined by
\[x_{ij} = \begin{cases}
1, & \mbox{if } v_i \in X_j \cup Z_j, \\
0, & \mbox{otherwise.}
\end{cases} \quad
y_{ij} = \begin{cases}
1, & \mbox{if } v_i \in Y_j \cup Z_j, \\
0, & \mbox{otherwise.}
\end{cases}\]
In other words, if $v_i \in X_j$, we have $x_{ij} = 1$ and $y_{ij} = 0$; if $v_i \in Y_j$, we have $x_{ij} = 0$ and $y_{ij} = 1$; and, if $v_i \in Z_j$, we have $x_{ij} = y_{ij} = 1$.
Let $M$ be the $n \times 2k$ matrix whose rows are the incidence vectors for the vertices of $G$. Once again, with $H_2 = \left(\begin{smallmatrix} 0 & 1 \\ 1 & 0 \end{smallmatrix}\right)$, we have
\[M \left(\oplus_1^k H_2\right) M^T = A(G).\]
Conversely, given a matrix $M$ as in Proposition~\ref{prop:friedland}, we can associate a collection of tricliques whose symmetric difference of edge sets is $E(G)$.

From Proposition~\ref{prop:friedland}, we can infer that for any graph $G$ there exists a collection of $r_2(G)/2$ tricliques whose symmetric difference of edge sets is $E(G)$. Since each triclique in such a collection can be replaced by two bicliques, we obtain an upper bound on $b_2(G)$ for any graph $G$: 
\begin{equation}\label{eq:2rankupper}
    b_2(G) \leq r_2(G).
\end{equation}
The idea of expressing graphs as symmetric differences of tricliques motivates us to define a new family of graphs, similar to the graphs $B_k$ in Definition~\ref{defn:Bk}.

\begin{definition}\label{defn:Tk}
    The graph $T_k$ has vertex set consisting of all strings of length $k$ with entries in $\{0,1,2, \blank\}$, where vertex $v$ and vertex $u$ are adjacent if and only if the number of places where they differ, and neither is $\blank$, is odd.
\end{definition}

The graphs $T_k - v$, where $v = (0, \ldots, 0)$, are in fact well-known. Letting $M$ be the $n \times (2^{2k}-1)$ matrix consisting of all distinct non-zero vectors over $\mathbb{F}_2$ of length $2k$, we have $A(T_k-v) = M(\oplus_1^k H_2)M^T$. In this light, we can recognize these as the non-orthogonality graphs of the unique non-degenerate symplectic form on $\mathbb{F}_2^{2k}$ studied in, for example, \cite{parsons1988exotic} and \cite{godsil2001chromatic}. The graphs $T_k$ have many interesting properties, one of which is similar to a property of $B_k$: every twin-free graph $G$ with $r_2(G) = 2k$ is an induced subgraph of $T_k$~\cite{godsil2001chromatic}. In particular, since the $2$-rank of $T_k-v$ is at most the rank of $M$, we have $r_2(T_k-v) = r_2(T_k) = 2k$. For $k \in \{1,2\}$, we have checked that $b_2(T_k) = 2k$ which agrees with the upper bound in equation~\eqref{eq:2rankupper}. The following proposition provides a lower bound for $b_2(T_k)$ which is bounded away from the general lower bound in Proposition~\ref{prop:ranklower} asymptotically.

\begin{prop}\label{prop:b2Tk}
For any positive integer $k$, $b_2(T_k) \geq \log_3(4) \cdot k$.
\end{prop}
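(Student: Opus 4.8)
The plan is to combine a vertex-count with the universality of $B_b$ from Proposition~\ref{prop:Bk}. Observe that $T_k$ has exactly $4^k$ vertices (strings of length $k$ over the four-letter alphabet $\{0,1,2,\blank\}$), whereas $B_b$ has exactly $3^b$ vertices. If I can show that $T_k$ is twin-free, then setting $b = b_2(T_k)$ and applying Proposition~\ref{prop:Bk} yields an induced embedding of $T_k$ into $B_b$, so that $4^k \le 3^b$. Taking $\log_3$ of both sides gives $b \ge k\log_3(4)$, which is precisely the claimed bound. Thus the entire argument reduces to verifying that $T_k$ has no twins.

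To handle twins cleanly I would pass to the linear-algebraic description of $T_k$ already set up in the excerpt. Encoding each coordinate by $0 \mapsto (1,0)$, $1 \mapsto (0,1)$, $2 \mapsto (1,1)$, and $\blank \mapsto (0,0)$ identifies the vertex set of $T_k$ with all of $\mathbb{F}_2^{2k}$ (the four pairs are precisely the four vectors of $\mathbb{F}_2^2$, so distinct strings give distinct vectors), and makes two vertices adjacent exactly when the symplectic form $\langle x, y\rangle = x^{T}(\oplus_1^k H_2)y$ equals $1$; this is the identity $A(T_k) = M(\oplus_1^k H_2)M^{T}$ noted just before Definition~\ref{defn:Tk}. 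The key point is that this form is non-degenerate, so for any nonzero $d \in \mathbb{F}_2^{2k}$ the functional $w \mapsto \langle d, w\rangle$ is nonzero and hence equals $1$ on exactly half of the vectors, i.e. on $4^k/2$ of them.

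The crux is then the following short computation. Suppose $u \neq v$ are twins, so that $N(u) = N(v)$; in particular $u$ and $v$ are non-adjacent, giving $\langle u, v\rangle = 0$. Put $d = u + v \neq 0$ and $S = \{w : \langle d, w\rangle = 1\}$. Since the form is alternating ($\langle x,x\rangle = 0$), one finds $\langle d, u\rangle = \langle d, v\rangle = \langle u, v\rangle = 0$, so $u, v \notin S$. On the other hand, every $w \in S$ satisfies $\langle u, w\rangle \neq \langle v, w\rangle$ and is therefore adjacent to exactly one of $u, v$; for $w \notin \{u,v\}$ this contradicts $N(u) = N(v)$. Hence $S \subseteq \{u,v\}$, which together with $u,v \notin S$ forces $S = \emptyset$, contradicting $|S| = 4^k/2 \ge 2$. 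Thus $T_k$ is twin-free, and the counting argument above finishes the proof.

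The only genuine obstacle is this twin-freeness step; everything else is bookkeeping. A careful write-up should confirm that the coordinate encoding is injective (so $d \neq 0$) and that the non-degeneracy and alternating property of $\oplus_1^k H_2$ are used exactly where claimed, but no delicate estimates are required.
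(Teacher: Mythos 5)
Your proof is correct and follows essentially the same route as the paper: identify $b = b_2(T_k)$, use twin-freeness and Proposition~\ref{prop:Bk} to embed $T_k$ as an induced subgraph of $B_b$, and compare orders to get $4^k \le 3^b$. The only difference is that the paper simply asserts that $T_k$ has no twin vertices, whereas you supply a (correct) verification via the non-degenerate alternating form $\langle x,y\rangle = x^T(\oplus_1^k H_2)y$; this is a worthwhile detail but not a different approach.
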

\begin{proof}
Let $l = b_2(T_k)$. Since $T_k$ has no twin vertices, it is an induced subgraph of $B_l$ (see Definition~\ref{defn:Bk}). Thus, the order of $T_k$ is at most that of $B_l$, or $4^k \leq 3^l$. It follows that $l \geq \log_3(4) \cdot k$.
\end{proof}

Proposition~\ref{prop:b2Tk} implies the existence of a graph, for any positive integer $r$, such that $b_2(G) > r_2(G)/2 + r$. We can use the graphs $T_k$ to obtain other graphs with this property. Let $r$ and $k$ be positive integers such that $k \geq 2(\log_3(4) - 1)^{-1}r$, and let $H$ be a graph on a subset of the vertices of $T_k$ such that $b_2(H) \leq r/2$. By Proposition~\ref{prop:ranklower}, $r_2(H) \leq r$. If $G = T_k \triangle H$, then \[ \frac{r_2(G)}{2} \leq k+\frac{r}{2} \hspace{3mm} \text{and} \hspace{3mm} b_2(G) \geq \log_3(4)k - \frac{r}{2}. \]
It follows from our choice of $k$ that $b_2(G) \geq \frac{r_2(G)}{2} + r$.

It might be natural to think that one could construct a class of graphs for which $b_2(G)$ is bounded away from $r_2(G)/2$ asymptotically by taking disjoint unions of tricliques. However, while it is not hard to see that $r_2(G)$ is additive with respect to disjoint unions, $b_2(G)$ is not. Consider the disjoint union $2K_3$. We have $r_2(2K_3) = 4$, and, since $K_3$ is a triclique, it is not hard to find two tricliques whose symmetric difference is $2K_3$. Since the minimum cardinality of an odd cover of $K_3$ is 2, it is perhaps surprising that $b_2(2K_3) = 3$. Figure~\ref{fig:2K3} depicts a minimum odd cover of $G$. In fact, we can construct odd covers of $kK_3$ for any positive integer $k$ of cardinality $k+1 = r_2(kK_3)/2 + 1$, showing that the disjoint union of $k$ tricliques does not have $b_2(G)$ bounded away from $r_2(G)/2$.

\begin{figure}[h]
\begin{center}
\begin{tikzpicture}
[every node/.style={circle, draw=black!100, fill=black!100, inner sep=0pt, minimum size=4pt},
every edge/.style={draw, black!100, thick},
scale=.75]

\foreach \i in {1,...,6}
{
\node (\i) at (\i*360/6:1) {};
}
\draw[black!100,thick] (5) -- (6) -- (1) -- (5);
\draw[black!100,thick] (2) -- (3) -- (4) -- (2);

\node[draw=none,fill=none] at (2,0) {$=$};
\end{tikzpicture}
\quad
\begin{tikzpicture}
[every node/.style={circle, draw=black!100, fill=black!100, inner sep=0pt, minimum size=4pt},
every edge/.style={draw, black!100, thick},
scale=.75]

\foreach \i in {1,...,6}
{
\node (\i) at (\i*360/6:1) {};
}
\draw[black!100,thick] (6) -- (1) -- (2) -- (3) -- (6);

\node[draw=none,fill=none] at (2,0) {$\triangle$};
\end{tikzpicture}
\quad
\begin{tikzpicture}
[every node/.style={circle, draw=black!100, fill=black!100, inner sep=0pt, minimum size=4pt},
every edge/.style={draw, black!100, thick},
scale=.75]

\foreach \i in {1,...,6}
{
\node (\i) at (\i*360/6:1) {};
}
\draw[black!100,thick] (6) -- (3) -- (4) -- (5) -- (6);

\node[draw=none,fill=none] at (2,0) {$\triangle$};
\end{tikzpicture}
\quad
\begin{tikzpicture}
[every node/.style={circle, draw=black!100, fill=black!100, inner sep=0pt, minimum size=4pt},
every edge/.style={draw, black!100, thick},
scale=.75]

\foreach \i in {1,...,6}
{
\node (\i) at (\i*360/6:1) {};
}
\draw[black!100,thick] (1) -- (2) -- (4) -- (5) -- (1);
\end{tikzpicture}
\end{center}

\caption{A minimum odd cover of $2K_3$}
\label{fig:2K3}
\end{figure}
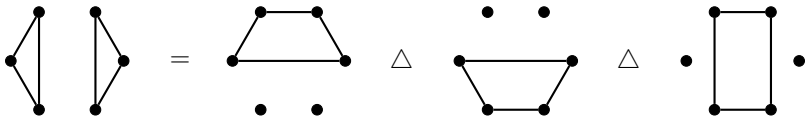

\section{Bipartite graphs}
\label{sec:bipartite}
We have seen in Proposition~\ref{prop:ranklower} that, in general, $b_2(G) \geq r_2 (G)/2$. In this section, we will show that $b_2(G) = r_2 (G)/2$ for all bipartite graphs $G$ by giving an explicit construction of an odd cover that achieves this lower bound. The method relies on Lemma~\ref{lem:nooddoddextension} below, which extends the observation from the previous paragraph that twin vertices do not affect $b_2(G)$.

We begin by proving the result for forests. We have seen that, in general, $b_2(G)$ and $\bp(G)$ do not agree. Figure~\ref{fig:C6}, for example, depicts a minimum odd cover of a bipartite graph for which $b_2(G) < \bp(G)$. However, in the case of forests, we do have $b_2(G) = \bp(G)$. To see this, we need the following lemma, which is a special case of Theorem~8 in~\cite{mohammadian2016trees}.

Let $m(G)$ denote the maximum size of a matching in $G$, and let $\tau(G)$ denote the minimum size of a vertex cover of $G$. It is a well-known result of K\H{o}nig~\cite{kHonig1931grafok} that, for any bipartite graph $G$, $m(G) = \tau(G)$.

\begin{lemma}[\cite{mohammadian2016trees}]\label{lem:treesmatching}
Let $T$ be a tree, and let $m(T)$ be the maximum size of a matching in $T$. Then $r_2(T) = 2m(T)$. 
\end{lemma}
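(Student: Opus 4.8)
The plan is to prove the statement for forests by induction on the number of edges, which suffices since a tree is a forest and both $r_2$ and $m$ are additive over connected components (the adjacency matrix of a disjoint union is block diagonal, and a maximum matching is obtained by matching each component separately). The base case is an edgeless forest, for which $r_2 = 0 = 2m$.

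For the inductive step I would take a forest $F$ with at least one edge; it then has a leaf $u$, and I let $v$ be its unique neighbor and set $F' = F - u - v$, again a forest. First I would record the matching identity $m(F) = m(F') + 1$. This rests on the standard fact that some maximum matching of $F$ uses the pendant edge $uv$: starting from any maximum matching $M$, if $uv \notin M$ then $u$ is unmatched (its only possible partner is $v$), so $v$ must be matched to some $w$ (otherwise $M + uv$ would be larger), and then $M - vw + uv$ is a maximum matching containing $uv$. Deleting $uv$ from such a matching gives a matching of $F'$, while any matching of $F'$ extends by $uv$; together these give $m(F') = m(F) - 1$.

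The core of the argument is the rank identity $r_2(F) = r_2(F') + 2$. I would order the vertices so that $u, v$ come first and write the adjacency matrix in block form $A(F) = \left(\begin{smallmatrix} B & C \\ C^T & A(F') \end{smallmatrix}\right)$, where $B = \left(\begin{smallmatrix} 0 & 1 \\ 1 & 0\end{smallmatrix}\right)$ encodes the edge $uv$ and $C$ is the $2 \times (|F|-2)$ matrix recording the adjacencies of $u$ and $v$ into $F'$. Because $u$ is a leaf, its row of $C$ is zero, so $C = \left(\begin{smallmatrix} 0 \\ \mathbf{b}^T \end{smallmatrix}\right)$ with $\mathbf{b}$ the incidence vector of $v$ in $F'$. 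Since $B$ is invertible over $\mathbb{F}_2$ (indeed $B^{-1} = B$), the rank equals $\operatorname{rank}(B)$ plus the rank of the Schur complement $A(F') - C^T B^{-1} C$. A direct computation gives $B^{-1} C = \left(\begin{smallmatrix} \mathbf{b}^T \\ 0 \end{smallmatrix}\right)$, hence $C^T B^{-1} C = 0$, so the Schur complement is exactly $A(F')$ and $r_2(F) = 2 + r_2(F')$. (Equivalently, one clears the $u$- and $v$-rows and columns by elementary operations; the leaf contributes a clean invertible $2\times 2$ block whose cross term vanishes.) Combining the two identities with the inductive hypothesis $r_2(F') = 2m(F')$ yields $r_2(F) = 2 + 2m(F') = 2m(F)$.

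The one point requiring care is that deleting $v$ may disconnect $F$, which is precisely why I set up the induction over forests rather than trees; this is harmless given the additivity noted above. The main conceptual step is the vanishing of the cross term $C^T B^{-1} C$, which is forced by $u$ being a leaf: $u$'s only neighbor lies inside the invertible $2\times 2$ block. Since every step of the rank argument is valid over an arbitrary field, the restriction to $\mathbb{F}_2$ introduces no additional obstacle; in particular the common value $2m(T)$ is field-independent.
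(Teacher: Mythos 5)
Your proof is correct, but it necessarily takes a different route from the paper, because the paper does not prove this lemma at all: it cites it as a special case of Theorem~8 of Mohammadian~\cite{mohammadian2016trees}, a result about acyclic matrices over arbitrary fields, and then passes from trees to forests by the separate remark that the $2$-rank is additive over components. Your contribution is a self-contained elementary argument, and both of its key steps check out: the pendant-edge matching identity $m(F)=m(F')+1$ is the standard exchange argument, and the rank identity $r_2(F)=r_2(F')+2$ is a correct application of Schur-complement (Guttman) rank additivity, since $B=\left(\begin{smallmatrix}0&1\\1&0\end{smallmatrix}\right)$ is invertible over $\mathbb{F}_2$ and the cross term $C^TB^{-1}C$ vanishes precisely because the leaf $u$ contributes a zero row to $C$; moreover, running the induction over forests rather than trees correctly absorbs the possible disconnection caused by deleting $v$, which is the one structural pitfall here. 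What the citation buys the paper is brevity and the full generality of Mohammadian's theorem on acyclic matrices; what your argument buys is independence from the reference and, as you note, validity over every field, so it recovers the fact that the rank of a forest's adjacency matrix is field-independent. Your parenthetical alternative of clearing the $u$- and $v$-rows and columns by elementary operations is also close in spirit to the technique the paper does spell out later, in the proof of Lemma~\ref{lemma:RankByAdjacentTwins}, though applied there to pairs of adjacent twins rather than to a pendant edge.
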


It is not hard to see that the 2-rank of a graph is the sum of the 2-ranks of its components. Thus, $r_2(F) = 2m(F)$ for any forest $F$. We will show that $b_2(F) = m(F)$, which agrees with the lower bound in Proposition~\ref{prop:ranklower}.

\begin{prop}\label{prop:forests}
For any forest $F$, we have
\[b_2(F) = \bp(F) = m(F).\]
\end{prop}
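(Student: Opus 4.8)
The plan is to establish the chain of inequalities $m(F) \leq b_2(F) \leq \bp(F) \leq m(F)$, which forces all three quantities to be equal. The leftmost inequality is immediate: by Lemma~\ref{lem:treesmatching} and the additivity of the $2$-rank over components we have $r_2(F) = 2m(F)$, so Proposition~\ref{prop:ranklower} gives $b_2(F) \geq r_2(F)/2 = m(F)$. The inequality $b_2(F) \leq \bp(F)$ holds for every graph, as already noted in the introduction (every biclique partition is in particular an odd cover). So the entire content of the proposition is the remaining upper bound $\bp(F) \leq m(F)$: one must exhibit a biclique partition of $F$ using only $m(F)$ bicliques.

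To build such a partition, I would work component by component (since a biclique partition of each tree assembles into one for the whole forest, and $m(F)$ is the sum of the matching numbers of the components). For a single tree $T$, the natural bicliques to use are stars: each star $K_{1,d}$ centered at a vertex $v$ is a biclique with partite sets $\{v\}$ and a set of neighbors of $v$. The goal is thus to partition the edges of $T$ into exactly $m(T)$ stars. Here I would invoke König's theorem, which is stated just before the lemma: for the bipartite graph $T$ we have $m(T) = \tau(T)$, the minimum vertex cover number. Taking a minimum vertex cover $C$ of size $m(T)$, every edge of $T$ is incident to at least one vertex of $C$. The plan is to orient or assign each edge to a covering vertex in $C$, grouping the edges assigned to each $c \in C$ into the star centered at $c$; this yields at most $|C| = m(T)$ stars whose edges partition $E(T)$.

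The main obstacle is making the edge-to-cover-vertex assignment produce genuinely disjoint stars and controlling the count so that it is exactly $m(T)$ rather than merely at most $m(T)$. An edge with both endpoints in $C$ could be claimed by either endpoint, so I would fix a rule (for instance, root the tree and assign each edge to its endpoint in $C$ that is closer to the root, or process a leaf-stripping argument). I would need to verify that each vertex of $C$ actually receives at least one edge, so that the number of nonempty stars equals $|C| = m(T)$; a vertex in a minimum cover with no incident assigned edge would be removable, contradicting minimality, so this should follow from minimality of the cover. Assembling these stars gives a biclique partition of size $m(T)$, hence $\bp(T) \leq m(T)$, and summing over components completes the bound $\bp(F) \leq m(F)$ and thus the proposition. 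An alternative that sidesteps the assignment bookkeeping is an induction on the number of edges, repeatedly peeling off the star at a carefully chosen vertex (e.g., the neighbor of a leaf) and checking that the matching number drops by exactly one; I would keep this induction in reserve in case the cover-based argument leaves ambiguity in the disjointness step.
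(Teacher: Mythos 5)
Your proof is correct, and its overall skeleton---the sandwich $m(F) \leq b_2(F) \leq \bp(F) \leq m(F)$---matches the paper's; the difference lies entirely in the last inequality. The paper simply cites Boyer~\cite{boyer1998rank} for the equality $\bp(F) = m(F)$ and deduces $b_2(F) \leq m(F)$, whereas you prove $\bp(F) \leq m(F)$ from scratch via K\H{o}nig's theorem: take a minimum vertex cover $C$ with $|C| = \tau(F) = m(F)$ and split $E(F)$ into stars centered at the vertices of $C$. This is a legitimate, self-contained alternative, and the paper in fact endorses exactly this construction in the remark immediately following the proposition (``Given a vertex cover of a graph, we can easily construct a partition of its edges into stars''); your route buys independence from the external reference at essentially no cost, while the paper's route is shorter and records $\bp(F) = m(F)$ without further work (though your chain recovers the exact value too, since the sandwich forces equality throughout). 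One comment: most of the bookkeeping you worry about is unnecessary. For the upper bound you only need \emph{at most} $m(F)$ stars---the inequality chain then forces $\bp(F) = m(F)$ exactly---so there is no need to show that every cover vertex receives an edge (your minimality argument for that is fine but superfluous). Disjointness is likewise automatic: any rule assigning each edge to exactly one of its endpoints in $C$ makes the resulting stars edge-disjoint by construction, so neither the rooting device nor the leaf-stripping induction you keep in reserve is needed.
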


\begin{proof}
Let $F$ be a forest. It is shown in~\cite{boyer1998rank} that $\bp(F) = m(F)$. Thus, $b_2(F) \leq m(F)$. By Proposition~\ref{prop:ranklower} and Lemma~\ref{lem:treesmatching}, we have $2b_2(F) \geq r_2(F) \geq 2m(F)$, from which the result follows.
\end{proof}

Proposition~\ref{prop:forests}, along with K\H{o}nig's theorem, implies that $b_2(F) = \tau(F)$. Given a vertex cover of a graph, we can easily construct a partition of its edges into stars. Thus, a partition of the edges of a forest $F$ into $\tau(F)$ stars is, in fact, a minimum odd cover of $F$.

We will proceed to prove that, for any bipartite graph $G$, $2b_2(G) = r_2(G)$. First, we require a lemma which holds for all graphs. Suppose that, for some vertex $v$ of a graph $G$, $r_2(G)=r_2(G-v)$. Then the row in $A(G)$ corresponding to the vertex $v$ can be written as the sum modulo 2 of a subset of the other rows of $A(G)$. Let $S$ be the set of vertices corresponding to these rows. In other words, the neighborhood of $v$ is the symmetric difference of the neighborhoods of the vertices in $S$: $vw \in E(G)$ if and only if $w$ has an odd number of neighbors in $S$.

\begin{lemma}\label{lem:nooddoddextension}
Suppose that $r_2(G-v) = r_2(G)$ for some vertex $v$ of a graph $G$.
Let $S$ be a set of vertices whose symmetric difference of open neighborhoods is $N(v)$.
If there is a minimum odd cover $\mathscr{B}$ of $G-v$ which does not contain a biclique whose partite sets each have an odd number of vertices from $S$, then $b_2(G) = b_2(G-v)$.
\end{lemma}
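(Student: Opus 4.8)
The statement splits into the two inequalities $b_2(G-v) \le b_2(G)$ and $b_2(G) \le b_2(G-v)$, and the plan is to dispatch the first in one line and devote the real work to the second. For the easy direction, I would start from an arbitrary odd cover of $G$ and delete $v$ from every partite set in which it appears; since removing $v$ never changes whether two vertices $u, w \ne v$ lie in opposite partite sets of a biclique, the resulting collection is an odd cover of $G-v$ using no more bicliques. This already gives $b_2(G-v) \le b_2(G)$, so the content of the lemma is the reverse inequality, which is where the hypotheses come in.

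For $b_2(G) \le b_2(G-v)$, the plan is to take the specific minimum odd cover $\mathscr{B} = \{(X_1,Y_1), \dots, (X_k, Y_k)\}$ of $G-v$ supplied by the hypothesis and to insert $v$ into its partite sets, guided by $S$, so as to create exactly the adjacencies $N(v)$ without introducing any new biclique. Writing $a_j = |X_j \cap S|$ and $b_j = |Y_j \cap S|$, the assumption on $\mathscr{B}$ says that $a_j$ and $b_j$ are never both odd. I would then form $\mathscr{B}'$ by placing $v$ into $X_j$ when $a_j$ is odd, into $Y_j$ when $b_j$ is odd, and into neither partite set when both are even. The crucial point that makes this assignment legitimate is precisely the no--odd--odd condition: a biclique with $a_j$ and $b_j$ both odd would demand that $v$ be added to $X_j$ and $Y_j$ at once, which is impossible since partite sets are disjoint.

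The heart of the argument is verifying that $\mathscr{B}'$ is an odd cover of $G$. Adjacencies among vertices of $V(G-v)$ are unaffected by the insertion of $v$, so only the adjacencies of $v$ need checking, and for these I would use the defining property of $S$ in its equivalent form $w \in N(v) \iff |N(w) \cap S|$ is odd. The key computation is that, for a fixed $w \ne v$, the number of bicliques of $\mathscr{B}'$ separating $v$ from $w$ has the same parity as $\sum_{j} \left( [w \in X_j]\, b_j + [w \in Y_j]\, a_j \right)$, which equals $|N(w) \cap S| \pmod 2$ because each $s \in S$ is separated from $w$ an odd number of times exactly when $sw \in E(G-v)$. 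Matching this target against the contribution of the three cases---adding $v$ to $X_j$ separates $v$ from $w$ iff $w \in Y_j$, adding $v$ to $Y_j$ separates them iff $w \in X_j$, and adding $v$ to neither contributes nothing---confirms that $v$ is made adjacent to exactly $N(v)$. Since $|\mathscr{B}'| = |\mathscr{B}| = b_2(G-v)$, this yields $b_2(G) \le b_2(G-v)$ and completes the proof.

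I expect the main obstacle to be the parity bookkeeping in this last verification: one must choose the case assignment so that the contribution of biclique $j$ lands on $[w \in X_j]\, b_j + [w \in Y_j]\, a_j$ rather than on its swap, and getting the correspondence between ``$v$ into $X_j$'' and ``$w \in Y_j$'' oriented correctly is the only place the argument can go wrong. I would also note that the rank hypothesis $r_2(G-v) = r_2(G)$ is used only to guarantee that a set $S$ with $\triangle_{s \in S} N(s) = N(v)$ exists in the first place; once such an $S$ is fixed, the rank condition plays no further role in the construction.
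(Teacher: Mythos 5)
Your proposal is correct and follows essentially the same route as the paper: the identical construction of placing $v$ into $X_j$ or $Y_j$ according to the parity of $|S\cap X_j|$ and $|S\cap Y_j|$, with the no--odd--odd hypothesis guaranteeing well-definedness. The only differences are that you spell out the parity double-count over $s \in S$ that the paper dismisses as ``easy to see,'' and you make explicit the trivial direction $b_2(G-v) \le b_2(G)$ (and the observation that the rank hypothesis serves only to guarantee that $S$ exists), both of which the paper leaves implicit.
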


\begin{proof}
Suppose that we have a minimum odd cover of $G-v$ of cardinality $k=b_2(G-v)$, which we denote as 
\[ \mathscr{B} = \{ (X_1,Y_1),\dots,(X_k,Y_k)  \},  \] 
where each biclique $i$ is indicated by its two partite sets $X_i$ and $Y_i$. Suppose that there is no $i$ such that both $|S\cap X_i|$ and $|S\cap Y_i|$ are odd. Then, we can extend this odd cover to an odd cover of $G$ using $k$ bicliques as follows:
\[ \mathscr{B}' = \{ (X_1',Y_1'),\dots,(X_k',Y_k')  \},  \]
where
\[ X_i '= \begin{cases} X_i, & \mbox{if }|S\cap X_i|\equiv 0 \pmod 2,\\X_i\cup \{v\}, & \mbox{if }|S\cap X_i|\equiv 1 \pmod 2, \end{cases} \]
for $1 \leq i \leq k$, and the same for the sets $Y_i'$. Since $|S\cap X_i|$ and $|S\cap Y_i|$ are never odd at the same time, this is a valid construction, and it is easy to see that $vw$ is an edge in an odd number of bicliques in $\mathscr{B}'$ if and only if $w$ has an odd number of neighbors in $S$. That is, $\mathscr{B}'$ is an odd cover of $G$ of cardinality $k$.
\end{proof}

\begin{theorem}\label{thm:bipartite}
If $G$ is bipartite,
\begin{equation*}
 b_2(G) = \frac{r_2 (G)}{2}.   
\end{equation*}
Furthermore, there exists a minimum odd cover of $G$ that respects its bipartition. 
\end{theorem}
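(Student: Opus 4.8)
The plan is to prove the slightly stronger statement that $G$ admits a minimum odd cover of cardinality $r_2(G)/2$ in which every biclique has one partite set contained in each side of the bipartition; since Proposition~\ref{prop:ranklower} already gives $b_2(G) \geq r_2(G)/2$, establishing such a cover yields both conclusions simultaneously. I would proceed by induction on $|V(G)|$, the base case (the empty graph with the empty cover) being trivial.

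The key structural fact I would exploit is that, writing $A(G) = \left(\begin{smallmatrix} 0 & M \\ M^T & 0 \end{smallmatrix}\right)$ for the biadjacency matrix $M$ of the bipartition $(A,B)$, one has $r_2(G) = 2\,\mathrm{rank}_{\F_2}(M)$, because the block rows $[\,0\mid M\,]$ and $[\,M^T\mid 0\,]$ have disjoint column supports and each contributes $\mathrm{rank}(M)$ to the total $2$-rank. For the inductive step, fix a vertex $v \in A$ and, by the induction hypothesis applied to the bipartite graph $G - v$ with inherited bipartition $(A\setminus\{v\},B)$, a bipartition-respecting minimum odd cover $\mathscr{B}$ of $G - v$ of size $r_2(G-v)/2$. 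Deleting $v$ removes exactly one row from $M$, so $\mathrm{rank}(M)$ either stays the same or drops by one; equivalently $r_2(G) \in \{\,r_2(G-v),\ r_2(G-v)+2\,\}$, and these are the only two cases.

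If $r_2(G) = r_2(G-v)+2$, I would simply adjoin the star $(\{v\}, N(v))$ to $\mathscr{B}$. This is an odd cover of $G$ of size $r_2(G-v)/2 + 1 = r_2(G)/2$, and since $v \in A$ while $N(v) \subseteq B$, it respects the bipartition. If instead $r_2(G) = r_2(G-v)$, then the row of $v$ lies in the span of the remaining rows of $M$, so there is a set $S \subseteq A \setminus \{v\}$ with $\triangle_{w \in S} N(w) = N(v)$; the point is that $S$ may be taken to lie entirely inside $A$. Because $\mathscr{B}$ respects the bipartition, the partite set of each biclique that lies in $B$ meets $S$ in zero (hence an even number of) vertices, so no biclique has both partite sets meeting $S$ in an odd number of vertices. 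The hypothesis of Lemma~\ref{lem:nooddoddextension} is therefore satisfied, and it produces an odd cover of $G$ of the same cardinality $r_2(G-v)/2 = r_2(G)/2$; moreover $v$ is inserted only into the $A$-side of each biclique (those being the only partite sets with odd $S$-intersection), so the extended cover still respects the bipartition.

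The main obstacle, and the step where bipartiteness does all the work, is guaranteeing that $S$ can be confined to the single part $A$; this is exactly what makes the forbidden configuration of Lemma~\ref{lem:nooddoddextension} impossible against a bipartition-respecting cover, and it is the identity $r_2(G) = 2\,\mathrm{rank}(M)$ that both forces the $2$-rank to move in steps of $0$ or $2$ and pins $S$ inside one part. Everything else reduces to bookkeeping: verifying that the reinserted vertex lands on the correct side and that the two rank cases are exhaustive.
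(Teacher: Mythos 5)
Your proposal is correct and follows essentially the same route as the paper's proof: induction on the number of vertices, splitting into the cases $r_2(G)=r_2(G-v)+2$ (adjoin the star $(\{v\},N(v))$) and $r_2(G)=r_2(G-v)$ (take $S\subseteq A$ and apply Lemma~\ref{lem:nooddoddextension}), maintaining the bipartition-respecting invariant throughout. Your only additions are welcome explicitations of points the paper leaves implicit, namely the identity $r_2(G)=2\,\mathrm{rank}_{\mathbb{F}_2}(M)$ forcing the rank to move in steps of $0$ or $2$ and pinning $S$ inside $A$, and the check that the extended cover still respects the bipartition.
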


\begin{proof}
We proceed by induction on $n$, the order of $G$. The claim is easily verified for $n\leq 2$. Suppose that $G$ has $n+1$ vertices and the claim holds for $1,\dots,n$. Let $G=A \cup B$ be the bipartition, and let $v \in A$ be an arbitrary vertex of $G$. We distinguish two cases, based on the rank of $G-v$:
\begin{itemize}
\item[(i)] Suppose that $r_2(G)=r_2(G-v)+2$. We can extend a minimum odd cover $\mathscr{B}$ of $G-v$ to a minimum odd cover of $G$ by letting $\mathscr{B}'=\mathscr{B}\cup \{ (\{v\},N(v)) \}$.

\item[(ii)] Suppose that $r_2(G)=r_2(G-v)=k$. Then the neighborhood of $v$ is the sum (over $\mathbb{F}_2$) of the neighborhoods of a subset of vertices $S \subseteq A$. By the inductive hypothesis, there is a minimum odd cover $\mathscr{B}$ of $G-v$ that respects its bipartition. In particular, at least one partite set of each biclique in $\mathscr{B}$ contains no vertex in $S$. Thus, we may apply Lemma~\ref{lem:nooddoddextension} to extend $\mathscr{B}$ to a minimum odd cover of $G$.
\end{itemize}
This completes the proof.
\end{proof}

We obtain the following corollaries for certain classes of bipartite graphs for which the 2-rank is known.

\begin{cor}\label{cycle-even}
For $n$ even,
\[
b_2(C_n)=\frac{n-2}{2}.
\]
\end{cor}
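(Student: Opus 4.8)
The plan is to reduce this immediately to a 2-rank computation via Theorem~\ref{thm:bipartite}. For $n$ even, $C_n$ is bipartite (the two color classes being the even- and odd-indexed vertices), so Theorem~\ref{thm:bipartite} gives $b_2(C_n) = r_2(C_n)/2$ directly. Thus the entire content of the corollary is the claim that $r_2(C_n) = n-2$ when $n$ is even, and once that is established the result follows by dividing by $2$ (which is an integer since $n$ is even).

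To compute $r_2(C_n)$ I would work out the kernel of $A(C_n)$ over $\mathbb{F}_2$, since $r_2(C_n) = n - \dim_{\mathbb{F}_2} \ker A(C_n)$. Labeling the vertices $v_0, \dots, v_{n-1}$ cyclically, a vector $(w_0,\dots,w_{n-1}) \in \mathbb{F}_2^n$ lies in the kernel exactly when $w_{i-1} + w_{i+1} = 0$ for every $i$ (indices taken mod $n$), i.e.\ $w_{i+1} = w_{i-1}$ for all $i$. Hence any kernel vector is constant along the orbits of the shift $i \mapsto i+2$ on $\mathbb{Z}_n$. When $n$ is even, this shift has exactly two orbits, namely the even indices and the odd indices, so the kernel is spanned by the indicator vectors of these two classes and is $2$-dimensional. (One checks these are genuinely in the kernel: on an even index $i$ both neighbors are odd and contribute equal values, summing to $0$, and symmetrically on odd indices.) Therefore $\dim \ker A(C_n) = 2$ and $r_2(C_n) = n-2$, which combined with Theorem~\ref{thm:bipartite} yields $b_2(C_n) = (n-2)/2$.

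The only point that genuinely requires care — and the natural place a proof could go wrong — is the parity bookkeeping in the kernel count, since it is precisely the evenness of $n$ that makes the shift $i \mapsto i+2$ split into two orbits rather than one; for odd $n$ the single orbit forces a $1$-dimensional kernel and rank $n-1$, matching the off-by-one behavior recorded for odd cycles elsewhere in the paper. As an alternative to the explicit kernel computation, one could instead invoke the observation that $C_n$ is its own line graph: writing $B$ for the vertex–edge incidence matrix of $C_n$, one has $A(C_n) \equiv B^{T}B \pmod 2$, and analyzing the radical of this form recovers the same rank $n-2$. I would present the direct kernel argument as the main route, since it is self-contained and makes the dependence on the parity of $n$ transparent, and mention the line-graph identity only as corroboration.
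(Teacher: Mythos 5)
Your proof is correct and takes essentially the same route as the paper: the corollary is obtained there, too, by applying Theorem~\ref{thm:bipartite} to the bipartite graph $C_n$ together with the fact that $r_2(C_n)=n-2$ for even $n$, which the paper justifies by observing that $C_n$ is its own line graph rather than by your direct kernel computation. Your explicit determination of $\ker A(C_n)$ over $\mathbb{F}_2$ (two shift-by-two orbits when $n$ is even, hence a $2$-dimensional kernel) is a valid, self-contained verification of that same rank fact, so the two arguments differ only in this routine detail.
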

\begin{cor}\label{path}
For any $n$,
\[
b_2(P_n)=\begin{cases}
\frac{n}{2}, & \mbox{if } n\equiv 0\pmod{2},\\
\frac{n-1}{2}, &\mbox{if } n\equiv 1\pmod{2}.
\end{cases}
\]
\end{cor}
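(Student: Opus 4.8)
The plan is to combine Theorem~\ref{thm:bipartite} with a computation of the $2$-rank of the path. Since $P_n$ is bipartite, Theorem~\ref{thm:bipartite} gives $b_2(P_n) = r_2(P_n)/2$ immediately, so the entire task reduces to showing that $r_2(P_n) = 2\lfloor n/2\rfloor$; dividing by $2$ then yields $n/2$ when $n$ is even and $(n-1)/2$ when $n$ is odd, exactly as claimed.

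To compute $r_2(P_n)$ I would use the fact that a path is a tree, so Lemma~\ref{lem:treesmatching} applies and gives $r_2(P_n) = 2m(P_n)$. A maximum matching of $P_n$ is obtained by taking every other edge, so $m(P_n) = \lfloor n/2\rfloor$, and hence $r_2(P_n) = 2\lfloor n/2\rfloor$. Alternatively, one can compute the $2$-rank directly from the tridiagonal adjacency matrix $A(P_n)$ by determining its null space over $\mathbb{F}_2$: the equation $Ax = 0$ forces $x_2 = 0$ from the first row and $x_{i+1} = x_{i-1}$ from each interior row, so all even-indexed coordinates vanish and all odd-indexed coordinates equal $x_1$. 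The last row contributes $x_{n-1} = 0$, which when $n$ is even reads $x_1 = 0$ and kills the whole vector (nullity $0$, rank $n$), and when $n$ is odd is automatically satisfied (nullity $1$, rank $n-1$). Either route gives the same value $2\lfloor n/2\rfloor$.

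I do not expect a genuine obstacle here: the only content is the $2$-rank computation, and both the matching argument and the null-space argument are short and routine. In fact, one can bypass Theorem~\ref{thm:bipartite} entirely, since a path is a forest and Proposition~\ref{prop:forests} already states $b_2(F) = m(F)$ for every forest $F$; applying it with $m(P_n) = \lfloor n/2\rfloor$ proves the corollary in one line. I would nonetheless present the $2$-rank version to keep the argument parallel with Corollary~\ref{cycle-even}, whose even-cycle case cannot invoke Proposition~\ref{prop:forests} and genuinely needs $r_2(C_n)$ together with Theorem~\ref{thm:bipartite}.
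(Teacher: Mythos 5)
Your proposal is correct and follows essentially the paper's own route: the paper derives Corollary~\ref{path} from Theorem~\ref{thm:bipartite} together with $r_2(P_n)=2\lfloor n/2\rfloor$ (justified there by the line-graph remark in the introduction, whereas you obtain the same rank via Lemma~\ref{lem:treesmatching} with $m(P_n)=\lfloor n/2\rfloor$, or directly from the null space of the tridiagonal matrix --- both computations are valid). Your further observation that Proposition~\ref{prop:forests} already gives $b_2(P_n)=m(P_n)=\lfloor n/2\rfloor$ in one line is also sound and entirely internal to the paper's machinery.
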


\section{Odd cycles}
\label{sec:odd cycles}
We have seen that the lower bound in Proposition~\ref{prop:ranklower} is tight for even cycles. In addition, even cycles have the property $b_2(C_n)<\bp(C_n)$. However, neither of these hold for odd cycles. For odd $n$, $r_2(C_n)=n-1$, but we will show that $b_2(C_n)=\bp(C_n)=\frac{n+1}{2}$. 

\begin{theorem}\label{cycle-odd} Let $C_n$ be an odd cycle on $n$ vertices. Then $b_2(C_n)=\frac{n+1}{2}$.
\end{theorem}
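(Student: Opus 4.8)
The plan is to prove matching upper and lower bounds. The upper bound $b_2(C_n)\le \frac{n+1}{2}$ is the easy direction: I would exhibit a biclique \emph{partition} of $C_n$ of this size. Writing the vertices as $v_1,\dots,v_n$ in cyclic order, take the stars (bicliques $K_{1,2}$) centered at $v_1,v_3,\dots,v_{n-2}$; these are edge-disjoint and cover every edge except $v_{n-1}v_n$, which we add as a single extra biclique, for a total of $\frac{n-1}{2}+1=\frac{n+1}{2}$ bicliques. Since any biclique partition is an odd cover, this gives the upper bound.

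The lower bound is the real content. By Proposition~\ref{prop:ranklower} and the fact (recorded above) that $r_2(C_n)=n-1$ for odd $n$, we have $b_2(C_n)\ge \frac{n-1}{2}$; since $\frac{n-1}{2}$ is an integer, it suffices to rule out equality and conclude $b_2(C_n)\ge\frac{n+1}{2}$. So suppose for contradiction that $C_n$ has an odd cover $\mathscr B=\{(X_1,Y_1),\dots,(X_k,Y_k)\}$ with $k=\frac{n-1}{2}$. Form the $n\times 2k$ incidence matrix $M$ as in equation~\eqref{eq:MoplusMt}, so that $A(C_n)=M(\oplus_1^k H_2)M^T$ over $\mathbb{F}_2$, where the odd columns of $M$ are the indicator vectors of the sets $X_j$ and the even columns those of the sets $Y_j$.

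The key step is to show that tightness forces every partite set to have even size. Since $2k=n-1=r_2(C_n)$ and $\operatorname{rank} A(C_n)\le \operatorname{rank} M\le 2k$, the matrix $M$ has full column rank, hence is left-invertible; as $\oplus_1^k H_2$ is invertible over $\mathbb{F}_2$, this yields $\ker A(C_n)=\ker M^T$. Now every vertex of $C_n$ has degree $2$, so the all-ones vector $\mathbf 1$ lies in $\ker A(C_n)$ over $\mathbb{F}_2$; therefore $M^T\mathbf 1=0$, i.e. every column of $M$ sums to $0$, which says exactly that each $|X_j|$ and each $|Y_j|$ is even.

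Finally I would finish with a parity count. Because the edge sets of the bicliques in an odd cover have $E(C_n)$ as their symmetric difference, $|E(C_n)|\equiv\sum_{j=1}^k |X_j|\,|Y_j|\pmod 2$. The right-hand side is even, since each factor $|X_j|,|Y_j|$ is even, while $|E(C_n)|=n$ is odd --- a contradiction. Hence no odd cover of size $\frac{n-1}{2}$ exists, and $b_2(C_n)=\frac{n+1}{2}$. The main obstacle is precisely this $+1$ gap above the rank bound: the $2$-rank alone cannot detect it, and the crux is the observation that a rank-tight cover is forced into even partite sets via the kernel vector $\mathbf 1$, after which the odd edge-count of $C_n$ delivers the contradiction.
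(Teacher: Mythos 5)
Your proof is correct, and your lower-bound argument takes a genuinely different route from the paper's. Both arguments pivot on the parity identity $\sum_{j=1}^k |X_j|\,|Y_j| \equiv \|C_n\| \pmod 2$, but in opposite directions. The paper uses it to extract from an arbitrary odd cover a biclique $B=(X,Y)$ with \emph{both} parts odd, and then shows $r_2(C_n\triangle B)\geq n-1$ by a hands-on computation: it splits the rows of $A(C_n\triangle B)$ into subspaces $S,T,U$ indexed by $X$, $Y$, and the remaining vertices, bounds $\dim(S\cap T)\leq 1$ and $\dim((S+T)\cap U)\leq 1$ (Lemmas~\ref{cyc1} and~\ref{cyc2}), and finishes with the fact that the $2$-rank of an adjacency matrix over $\mathbb{F}_2$ is even. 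You instead exploit rank-tightness directly in the factorization $A(C_n)=M(\oplus_1^k H_2)M^T$ of equation~\eqref{eq:MoplusMt}: when $2k=n-1=r_2(C_n)$, the matrix $M$ is forced to have full column rank, so $\ker A(C_n)=\ker M^T$; since $C_n$ is $2$-regular, $\mathbf{1}\in\ker A(C_n)$, so every column of $M$ sums to zero, making every $|X_j|$ and $|Y_j|$ even and contradicting the parity identity. All the individual steps check out (full column rank does give injectivity of $v\mapsto Mv$ over $\mathbb{F}_2$, and $\oplus_1^k H_2$ is invertible, so the kernel identification is sound). Your route is shorter, avoids the subspace-dimension lemmas and the even-rank fact from~\cite{brouwer1992}, and isolates a clean general principle: any graph with all degrees even and an odd number of edges satisfies $b_2(G)\geq r_2(G)/2+1$. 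What the paper's computation buys in exchange is more robust structural information --- it shows that deleting \emph{any} odd-odd biclique from $C_n$ leaves a graph of $2$-rank at least $n-1$, a statement about all odd covers rather than only rank-tight ones --- and that delete-one-biclique strategy is reused for the complete-graph lower bound (the paper remarks after Lemma~\ref{lem:oddKnlower} that it admits a proof by the same adjacency-matrix analysis). Your upper bound is the same star partition as the paper's.
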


\noindent \textit{Proof.} Using an edge-disjoint collection of $\frac{n-1}{2}$ copies of $K_{1,2}$ and one $K_{1,1}$, we can find an odd cover of $C_n$, as is shown in Figure~\ref{fig:C5} for $C_5$. This establishes the upper bound.

Consider an odd cover, $\mathscr{B}$, of $C_n$. Since $C_n$ has an odd number of edges, there exists a biclique $B:=(X,Y)$ in $\mathscr{B}$ where $|X|=a$ and $|Y|=b$ are both odd. We will demonstrate that $r_2(C_n\triangle B)\geq{n-1}$, which implies that $b_2(C_n)-1\geq{\frac{n-1}{2}}$. 

Row $i$ of the adjacency matrix for $C_n$ is given by $f_i:=e_{i-1}+e_{i+1}$ where the indices are viewed modulo $n$.
Let $v_X=\sum_{j\in X}e_j$ and $v_Y=\sum_{j\in Y}e_j$. The $i^{th}$ row of the adjacency matrix for $B$ is $v_Y$ for $i\in X$, $v_X$ for $i\in Y$, and $0$ for the remaining rows.

Let $S$ be the vector space over $\mathbb{F}_2$ spanned by the rows of $A(C_n\triangle B)$ with indices in $X$, $T$ be the vector space over $\mathbb{F}_2$ spanned by the rows of $A(C_n \triangle B)$ with indices in $Y$, and $U$ be the vector space over $\mathbb{F}_2$ spanned by the remaining $n-a-b>0$ rows of $A(C_n\triangle B)$. Then
\begin{align*}
    r_2(C_n\triangle B_k)&=\dim{(S+T+U)}\\&=\dim{((S+T)+U)}\\
    &=\dim{(S+T)}+\dim{U}-\dim{((S+T)\cap U)}\\
    &=\dim{S}+\dim{T}-\dim(S\cap T)+\dim{U}-\dim((S+T)\cap U).
\end{align*}
To establish that this quantity is at least $n-1$, we make use of the following lemmas:

\begin{lemma}\label{cyc1}
$\dim{S}=a$, $\dim{T}=b$, and  $\dim{U}=n-a-b$.
\end{lemma}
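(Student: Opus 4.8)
The plan is to prove the three dimension equalities by analyzing linear dependencies among the rows of $A(C_n \triangle B)$, separating the ``pure cycle'' contribution $f_i$ from the constant shifts $v_X, v_Y$ by means of a single parity functional.

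First I would record two facts. First, let $\phi \colon \mathbb{F}_2^n \to \mathbb{F}_2$ be the linear functional $\phi(x) = \sum_{j=1}^n x_j$. Since each $f_i = e_{i-1}+e_{i+1}$ has exactly two nonzero entries, $\phi(f_i)=0$; and since $a=|X|$ and $b=|Y|$ are both odd, $\phi(v_X) = a = 1$ and $\phi(v_Y) = b = 1$ in $\mathbb{F}_2$. Second, for odd $n$, every proper subset of $\{f_1,\dots,f_n\}$ is linearly independent over $\mathbb{F}_2$. This follows from $r_2(C_n)=n-1$ together with the observation that $\sum_{i=1}^n f_i = 0$ is the unique (up to scaling) dependency: indeed, $\sum_{i\in I} f_i = 0$ forces the indicator vector of $I$ to be invariant under shifting indices by $2$, and for odd $n$ this shift acts as a single $n$-cycle, so $I = \emptyset$ or $I = \{1,\dots,n\}$.

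Given the second fact, $\dim U = n-a-b$ is immediate, since the rows spanning $U$ are precisely $\{f_i : i \notin X \cup Y\}$, a proper subset of size $n-a-b$ (proper because $a+b>0$). For $\dim S = a$, I would take a vanishing combination $\sum_{i\in I}(f_i + v_Y)=0$ with $I \subseteq X$ and apply $\phi$: the left side evaluates to $\sum_{i\in I}\big(\phi(f_i)+\phi(v_Y)\big) = |I|$ in $\mathbb{F}_2$, so $|I|$ must be even. But then the $v_Y$ summands cancel and $\sum_{i\in I} f_i = 0$, whence $I = \emptyset$ by the second fact (here $I \subseteq X$ is proper because $b \geq 1$). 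Thus the $a$ rows indexed by $X$ are independent, giving $\dim S = a$. The argument for $\dim T = b$ is identical, with the roles of $X$ and $Y$ exchanged and $v_X$ in place of $v_Y$.

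The one genuine difficulty is that each row spanning $S$ (resp.\ $T$) carries the same extra summand $v_Y$ (resp.\ $v_X$), so these rows are not simply the $f_i$; a subset of even size could in principle eliminate this summand and manufacture a spurious dependency. The device that defeats this is exactly the parity functional $\phi$ combined with the hypothesis that $a$ and $b$ are odd: $\phi$ detects the parity of $|I|$ and rules out the odd case, after which the problem collapses to the clean dependency structure of the cycle. I would be careful to flag explicitly where oddness of $a$ and $b$ enters, since this is precisely the property secured by the earlier choice, in the proof of Theorem~\ref{cycle-odd}, of a biclique $B=(X,Y)$ with both partite sets of odd size.
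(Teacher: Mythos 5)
Your proof is correct and takes essentially the same approach as the paper's: $\dim U$ via linear independence of any proper subset of the rows of $A(C_n)$, and, for $S$ and $T$, a component-sum parity argument in which your functional $\phi$ is precisely the paper's observation that $\sum_{j} c_j f_j$ has even coordinate sum while $v_Y$ (or $v_X$) has odd sum because $b$ (or $a$) is odd. The only differences are cosmetic: you apply $\phi$ upfront to force $|I|$ even where the paper case-splits on $C=\sum_{j\in X}c_j \in \{0,1\}$, and you supply a shift-by-two proof of the proper-subset independence fact that the paper asserts without proof.
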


\begin{lemma}\label{cyc2}
There are at most two vectors in $S\cap T$ and at most three vectors in $(S+T)\cap U$.
\end{lemma}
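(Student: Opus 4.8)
The plan is to translate membership in each intersection into a single equation over $\mathbb{F}_2$ of the form $\sum_{i \in L} f_i = \epsilon_I v_Y + \epsilon_J v_X$, and then to solve it using one structural fact about odd cycles. Since $r_2(C_n) = n-1$ for odd $n$ and the all-ones vector $\mathbf 1$ lies in the kernel of the symmetric matrix $A(C_n)$ (every row sum is a degree, namely $2 \equiv 0$), that kernel is exactly $\{0,\mathbf 1\}$; equivalently, for $L \subseteq [n]$ we have $\sum_{i \in L} f_i = 0$ if and only if $L = \emptyset$ or $L = [n]$. Consequently, for any fixed target $w$, the set of $L$ solving $\sum_{i \in L} f_i = w$ is either empty or a complementary pair $\{L_0,\, [n]\setminus L_0\}$. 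Write $Z = [n] \setminus (X \cup Y)$ for the indices of the remaining rows, and recall $Z \neq \emptyset$.

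First consider $S \cap T$. By Lemma~\ref{cyc1} the listed rows of each of $S$, $T$, $U$ are independent, so every $w \in S \cap T$ has unique representations $w = \sum_{i \in I} f_i + \epsilon_I v_Y = \sum_{j \in J} f_j + \epsilon_J v_X$ with $I \subseteq X$, $J \subseteq Y$, where $\epsilon_I = |I| \bmod 2$ and $\epsilon_J = |J| \bmod 2$. Equating the two gives
\[ \sum_{i \in I \cup J} f_i = \epsilon_I v_Y + \epsilon_J v_X. \]
Summing all coordinates annihilates the left-hand side (each $f_i$ has two ones), while the right-hand side contributes $\epsilon_I b + \epsilon_J a \equiv \epsilon_I + \epsilon_J \pmod 2$, using that $a$ and $b$ are odd; hence $\epsilon_I = \epsilon_J$. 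If both are $0$, then $\sum_{i \in I \cup J} f_i = 0$ with $I \cup J \subseteq X \cup Y \subsetneq [n]$, forcing $I \cup J = \emptyset$ and $w = 0$. If both are $1$, then $\sum_{i \in I \cup J} f_i = v_X + v_Y$; its solutions form a complementary pair, but only one member can satisfy $I \cup J \subseteq X \cup Y \subsetneq [n]$, so $I \cup J$, and therefore $w$, is uniquely determined. This gives at most two vectors in $S \cap T$.

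The argument for $(S+T) \cap U$ is parallel but carries one fewer constraint. Any $w \in (S+T)\cap U$ satisfies both $w = \sum_{k \in K} f_k$ for some $K \subseteq Z$ and $w = s+t$ with $s \in S$, $t \in T$; expanding $s$ and $t$ and collecting terms over the pairwise disjoint index sets $K$, $I$, $J$ yields
\[ \sum_{m \in K \cup I \cup J} f_m = \epsilon_I v_Y + \epsilon_J v_X, \]
and the all-ones pairing again forces $\epsilon_I = \epsilon_J$. When both vanish, $\sum_m f_m = 0$ gives $K \cup I \cup J \in \{\emptyset, [n]\}$; the choice $[n]$ would force $I = X$ with $|I| = a$ odd, contradicting $\epsilon_I = 0$, so only $K \cup I \cup J = \emptyset$ survives and $w = 0$. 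When both equal $1$, $\sum_m f_m = v_X + v_Y$ has solutions forming a single complementary pair $\{M_0,\, [n]\setminus M_0\}$; since there is now no containment restriction, each set may occur, contributing at most two further values $w = \sum_{m \in M \cap Z} f_m$. Hence $(S+T)\cap U$ contains at most three vectors. (In fact the parity requirements that $|M \cap X|$ and $|M \cap Y|$ be odd rule out one of the two representatives, so the sharp count is two, but three suffices here.)

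The step I would handle most carefully, and the main obstacle, is the reduction $\epsilon_I = \epsilon_J$ via the all-ones pairing: this is exactly where the hypothesis that $a = |X|$ and $b = |Y|$ are both odd enters, and without it the terms $v_X$ and $v_Y$ would mix and the clean complementary-pair description would fail. The remaining bookkeeping — deciding which of the two complementary representatives is admissible under the containment constraint for $S \cap T$ versus the parity constraint for $(S+T)\cap U$ — is precisely what produces the asymmetry between the bounds of two and three, and must be verified separately in each case.
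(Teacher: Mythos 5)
Your proof is correct and follows essentially the same route as the paper's: both force $\epsilon_I = \epsilon_J$ (the paper's $C = D$) via the component-sum parity argument using that $a$ and $b$ are odd, and both count solutions using that the only dependency among all $n$ rows of $A(C_n)$ is the all-ones relation, which the paper invokes through $r_2(C_n) = n-1$ and Rank--Nullity. Your explicit kernel description $\{0, \mathbf{1}\}$ and the parenthetical sharpening of the $(S+T)\cap U$ bound to two vectors are minor refinements; the paper obtains that same sharpening afterwards from the power-of-two cardinality of $\mathbb{F}_2$-vector spaces.
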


Before proving these lemmas, we will show why proving them is sufficient to prove Theorem \ref{cycle-odd}. Lemma \ref{cyc2} gives that  $\dim(S\cap T)\leq{1}$. Since the number of vectors in each vector space over $\mathbb{F}_2$ must be a power of $2$, there are at most two vectors in $(S+T)\cap U$, so $\dim((S+T)\cap U)\leq{1}$.
Therefore $\dim(S+T+U)\geq{a+b-1+(n-a-b)-1}=n-2$. However, this is the rank of an adjacency matrix over $\mathbb{F}_2$ so it must be even (see \cite{brouwer1992}). Therefore, $r_2(C_n\triangle B)$ is at least $n-1$, as desired. It follows that $b_2(C_n)\geq{\frac{n+1}{2}}$, matching the upper bound.

\begin{proof}[Proof of Lemma \ref{cyc1}]
Note that any proper subset of the rows of $A(C_n)$ is linearly independent, so the $n-a-b$ rows whose span is used to define $U$ are linearly independent. Thus, $\dim(U)=n-a-b$. We also claim that $\{f_j+v_Y\mid j\in X\}$ and $\{f_j+v_X\mid j\in Y\}$ are linearly independent sets. Suppose there exist constants $c_j\in\mathbb{F}_2$ for $j\in X$ such that \[\sum_{j\in X}c_j(f_j+v_Y)=0.\]
Let $C = \sum_{j \in X} c_j$. If $C=0$, then we have that 
$\sum_{j\in X}c_jf_j=0$. 
However, the $f_j$'s for $j\in X$ are a proper subset of the rows of $A(C_n)$, so this can only occur when each $c_j$ is $0$. 

If instead we have $C=1$, then the sum $\sum_{j\in X}c_j(f_j+v_Y)=0$ simplifies to $\sum_{j\in X}c_jf_j=v_Y$. This is impossible since the sum of the components of $\sum_{j\in X}c_jf_j$ is even for all choices of $c_j$'s while the sum of the components of $v_Y$ is odd since $b$ is odd. This means that $\{f_j+v_Y \mid j\in X\}$ is linearly independent. By the same reasoning, $\{f_j+v_X \mid j\in Y\}$ is also linearly independent. This establishes that $\dim(S)=a$ and $\dim(T)=b$.
\end{proof}

\begin{proof}[Proof of Lemma \ref{cyc2}]
First, we consider a vector $w \in S\cap T$. As such
\[w = \sum_{j\in X}c_j(f_j+v_Y)=\sum_{j\in Y}d_j(f_j+v_X)\] for some choices of $c_j,d_j\in\mathbb{F}_2.$ Let $C = \sum_{j \in X} c_j$ and $D = \sum_{j \in Y} d_j$. As the sum of the components of $\sum_{j\in X}c_jf_j+\sum_{j\in Y}d_jf_j$ is always even, we have $C=D$.

If $C=D=0$, then
\[ \sum_{j\in X}c_jf_j =\sum_{j\in Y}d_jf_j \hspace{3mm} \text{which implies} \hspace{3mm} \sum_{j\in X}c_jf_j+\sum_{j\in Y}d_jf_j =0. \]

Since $\{f_j\mid j\in X\}\cup\{f_j\mid j\in Y\}$ is a proper subset of the rows of $A(C_n)$, it is linearly independent and hence $c_j=0$ for all $j\in X$ and $d_j=0$ for all $j\in Y$. Hence $w$ is necessarily the zero vector in this case. 

Lastly, if $C=D=1$, we get 
\[\sum_{j\in X}c_jf_j+\sum_{j\in Y}d_jf_j=v_X+v_Y. \]

Since $\{f_j\mid j\in X\}\cup\{f_j\mid j\in Y\}$ is a proper subset of rows of $A(C_n)$, it is linearly independent. As such, the choices of $c_j$ and $d_j$ to write $v_X+v_Y$ are unique and thus there is at most one such $w$. Hence, there are at most two vectors in $S\cap T$. 

Now, let $ w \in (S+T)\cap U$. As a result, 
\begin{equation}\label{eq:oddcyclew}
w = \sum_{j\in X}c_j(f_j+v_Y)+\sum_{j\in Y}d_j(f_j+v_X)=\sum_{j\in [n]\setminus{(X\cup Y)}}r_jf_j
\end{equation}
for some choices of $c_j,d_j,r_j\in\mathbb{F}_2.$ Again, let $C = \sum_{j \in X} c_j$ and $D = \sum_{j \in Y} d_j$.

Note that we must again have $C=D$; otherwise, the sum of the components on the left side of the second equality in~\eqref{eq:oddcyclew} would be odd while the sum of the components on the right side would be even.

If $C=D=0$, we have
\[ \sum_{j\in X}c_jf_j+\sum_{j\in Y}d_jf_j+\sum_{j\in[n]\setminus (X\cup Y)}r_jf_j =0. \] 
Since $A(C_n)$ has rank $n-1$, we know that the dimension of $\spn\{f_j\mid j\in [n]\}$ is $n-1$. By Rank-Nullity, there are exactly two choices for the $c_j$'s, $d_j$'s, and $r_j$'s that satisfy this equation.
Either all coefficients are $0$ or all coefficients are $1$. However, picking all the coefficients to be $1$ violates the conditions $\sum_{j\in X}c_j=0$ and $\sum_{j\in Y}d_j=0$ since $|X|=a$ and $|Y|=b$ are odd. Thus, all coefficients are $0$ and this case does not contribute any nonzero vectors to $(S+T)\cap U$. 

The only remaining case to consider is when $C=D=1$. This leads to
\[ \sum_{j\in X}c_jf_j+\sum_{j\in Y}d_jf_j+\sum_{j\in[n]\setminus (X\cup Y)}r_jf_j=v_X+v_Y. \]
Again, since $r_2(C_n)=n-1$, we know that if $v_X+v_Y$ is in the span of $\{f_j\mid j\in[n]\}$, then there are exactly two ways to pick the coefficients $c_j$, $d_j$, $r_j$. These correspond to at most two vectors in $(S+T)\cap U$. Along with the zero vector from the $C=D=0$ case, we see that there are at most three vectors in $(S+T)\cap U$.
\end{proof}

\section{Complete graphs}\label{complete}
In this section we will show that for all complete graphs $K_n$, $b_2(K_n)$ is either $\lceil n/2\rceil$ or $\lceil n/2\rceil+1$. The lower bound comes from evaluating the $2$-rank and the upper bound comes from explicit constructions. In particular, when $n=8k$ or $8k\pm 1$ for some positive integer $k$, $b_2(K_n)=\lceil n/2\rceil$. 

Recall that two vertices $u$ and $v$ of a graph $G$ are said to be {\em adjacent twins} if $N[u]=N[v]$. We will make use of the following lemma to bound $b_2(K_n)$.

\begin{lemma}\label{lemma:RankByAdjacentTwins}
Let $G$ be a graph on $n$ vertices. If $G$ contains a matching $M$ such that each edge $uv\in M$ is a pair of adjacent twins, then $r_2(G)=2|M|+r_2(G - V(M))$.
\end{lemma}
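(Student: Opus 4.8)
The plan is to exploit the structure that adjacent twins impose on the adjacency matrix over $\mathbb{F}_2$. If $uv \in M$ is a pair of adjacent twins, then $N[u] = N[v]$, which means the rows of $A(G)$ corresponding to $u$ and $v$ agree in every coordinate except at positions $u$ and $v$ themselves. Specifically, since $u$ and $v$ are adjacent, $A(G)_{uv} = A(G)_{vu} = 1$ and $A(G)_{uu} = A(G)_{vv} = 0$; for every other vertex $w$, the closed-neighborhood condition gives $A(G)_{uw} = A(G)_{vw}$. Thus the difference of rows $u$ and $v$ (over $\mathbb{F}_2$) is the vector $e_u + e_v$, supported only on the two twin coordinates. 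The first step, then, is to make this observation precise for a single adjacent-twin edge.

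The main work is to go from one edge to the whole matching via induction on $|M|$. I would handle a single edge $uv$ first: using row and column operations over $\mathbb{F}_2$ (which preserve rank), add row $u$ to row $v$ and column $u$ to column $v$. The row operation replaces row $v$ by $e_u + e_v$, and the subsequent column operation cleans this up so that the $v$-row and $v$-column become the standard basis vector $e_u$ (and symmetrically the $u$-row/column acquire a single $1$ in position $v$). After these operations the $2 \times 2$ block on rows and columns $\{u,v\}$ decouples as $\left(\begin{smallmatrix} 0 & 1 \\ 1 & 0 \end{smallmatrix}\right)$, which has rank $2$ and is supported entirely within $\{u,v\}$, while the remaining $(n-2)\times(n-2)$ block becomes exactly $A(G - \{u,v\})$. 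This is the crux: I must verify that the cleaning operations leave the rest of the matrix equal to the adjacency matrix of the induced subgraph, using the fact that $u$ and $v$ have identical closed neighborhoods so that subtracting one row/column from the other zeroes out all off-block entries in those lines. Hence $r_2(G) = 2 + r_2(G - \{u,v\})$ for a single adjacent-twin edge.

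For the inductive step I would then apply the single-edge result to one edge $uv \in M$, obtaining $r_2(G) = 2 + r_2(G - \{u,v\})$. Crucially, $M \setminus \{uv\}$ is still a matching of adjacent twins in $G - \{u,v\}$: deleting $u$ and $v$ removes them uniformly from the closed neighborhoods of all surviving vertices (since $u,v \notin \{x,y\}$ for any other edge $xy \in M$, and both were either in or out of each $N[x]$ together), so each remaining edge of $M$ is still a pair of adjacent twins in the smaller graph. Applying the inductive hypothesis to $G - \{u,v\}$ with the matching $M \setminus \{uv\}$ of size $|M|-1$ yields $r_2(G-\{u,v\}) = 2(|M|-1) + r_2(G - V(M))$, and combining gives $r_2(G) = 2|M| + r_2(G - V(M))$, as claimed.

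The step I expect to be the main obstacle is verifying that the matching condition is genuinely preserved under vertex deletion at the level of \emph{closed} neighborhoods, and that the row/column cleaning for one edge does not disturb the block indexed by the other matching edges. The potential subtlety is that an edge $uv$ may be adjacent to the endpoints of another matching edge $xy$; I need to confirm that adjacency between twin-pairs is handled consistently (either $u,v$ are both adjacent to both $x,y$ or to neither, by the twin property) so that the residual matrix after cleaning is literally $A(G - \{u,v\})$ and the twin structure among $M \setminus \{uv\}$ survives intact. Once this bookkeeping is checked carefully, the induction closes cleanly.
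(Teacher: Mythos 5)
Your proof is correct and takes essentially the same approach as the paper: both exploit that for an adjacent-twin pair $u,v$ the rows of $A(G)$ satisfy $r_u + r_v = e_u + e_v$ over $\mathbb{F}_2$, and use elementary row and column operations to split off a rank-$2$ block supported on $\{u,v\}$ while leaving the remaining diagonal block equal to $A(G - \{u,v\})$. The only cosmetic difference is that you peel off one matching edge at a time and induct on $|M|$ (checking, correctly, that twin-ness survives vertex deletion), whereas the paper performs the same elimination on all $2|M|$ rows and columns simultaneously.
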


\begin{proof}
It suffices to show that, by elementary operations, the adjacency matrix of $G$ can be turned into a block diagonal matrix whose blocks are an identity matrix of size $2|M|$ and the adjacency matrix of $G - V(M)$. Without loss of generality, we assume the first two rows $r_1,r_2$ correspond to two vertices $v_1,v_2$ that form an edge in $M$. By definition, $r_1+r_2=(1,1,0,\dots,0)$. The first two entries in each row other than $r_1$ and $r_2$ is either $(1,1)$ or $(0,0)$. So we can turn all entries in the first two columns, except for the two diagonal entries, into 0 by elementary row operations. And then we can turn all entries in the first two rows, except for the two diagonal entries, into 0 by elementary column operations. Similarly, assuming that the first $2|M|$ rows and columns correspond to vertices in $V(M)$, we can turn all entries in the first $2|M|$ rows or the first $2|M|$ columns, except for the $2|M|$ diagonal entries, into 0 by elementary operations, while the entries in the last $(n-2|M|)\times (n-2|M|)$ diagonal block remain the same. This completes the proof. 
\end{proof}

\begin{lemma}\label{lem:oddKnlower}
For integers $k\ge 1$, $b_2(K_{2k+1})\ge k+1$.
\end{lemma}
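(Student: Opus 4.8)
The plan is to show that deleting a single biclique from \emph{any} odd cover of $K_{2k+1}$ cannot drop the $2$-rank of what remains below $2k$; this upgrades the generic bound of Proposition~\ref{prop:ranklower} (which only gives $k$) by one. Concretely, let $\mathscr{B}$ be an odd cover of $K_{2k+1}$ of size $t$, and fix any biclique $B=(X,Y)\in\mathscr{B}$, with characteristic vectors $\xi=\sum_{j\in X}e_j$ and $\eta=\sum_{j\in Y}e_j$. The remaining $t-1$ bicliques form an odd cover of $K_{2k+1}\triangle B$, so by Proposition~\ref{prop:ranklower} we have $t-1\ge b_2(K_{2k+1}\triangle B)\ge r_2(K_{2k+1}\triangle B)/2$. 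It therefore suffices to prove the single clean statement that $r_2(K_{2k+1}\triangle B)=2k$ for every biclique $B$; then $t-1\ge k$, giving $b_2(K_{2k+1})\ge k+1$.

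To compute this rank I would work over $\mathbb{F}_2$ with $A(K_{2k+1}\triangle B)=(J+I)+(\xi\eta^\top+\eta\xi^\top)$, where $J$ is the all-ones matrix and $I$ the identity on the $2k+1$ vertices. Setting $[(J+I)+\xi\eta^\top+\eta\xi^\top]x=0$ and using $Jx=(\mathbf{1}\cdot x)\mathbf{1}$ rearranges to $x=(\mathbf{1}\cdot x)\mathbf{1}+(\eta\cdot x)\xi+(\xi\cdot x)\eta$, so the kernel is contained in $\operatorname{span}\{\mathbf{1},\xi,\eta\}$, a space of dimension at most $3$. Substituting $x=\sigma\mathbf{1}+\alpha\xi+\beta\eta$ back into the three defining conditions $\sigma=\mathbf{1}\cdot x$, $\alpha=\eta\cdot x$, $\beta=\xi\cdot x$, and using $\xi\cdot\eta=|X\cap Y|=0$ (disjoint partite sets) together with $\mathbf{1}\cdot\mathbf{1}=2k+1\equiv 1$, yields a homogeneous $3\times 3$ system in $(\sigma,\alpha,\beta)$ whose coefficient matrix is never zero (the coefficient of $\alpha$ in the second equation is $1$). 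Its solution space then has dimension at most $2$, which bounds $\dim\ker\le 2$.

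The parity fact that a symmetric matrix over $\mathbb{F}_2$ with zero diagonal has even rank~\cite{brouwer1992} closes the computation. Since $K_{2k+1}\triangle B$ is a simple graph on the odd number $2k+1$ of vertices, its adjacency rank is even, so $\dim\ker=(2k+1)-r_2(K_{2k+1}\triangle B)$ is odd; combined with $\dim\ker\le 2$ this forces $\dim\ker=1$ and hence $r_2(K_{2k+1}\triangle B)=2k$. I emphasize that the parity step is essential: without it the kernel bound only yields rank $\ge 2k-1$, which is not enough, whereas evenness promotes $2k-1$ up to $2k$.

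The main obstacle will be the kernel-dimension bound, and in particular the degenerate configurations in which $\mathbf{1},\xi,\eta$ fail to be linearly independent (for instance when $X$ or $Y$ is empty, or when $X\cup Y=V$ and the parts partition the vertex set). In those cases I would argue directly rather than through the generic $3\times 3$ system, noting that the containment $\ker\subseteq\operatorname{span}\{\mathbf{1},\xi,\eta\}$ already gives $\dim\ker\le\dim\operatorname{span}\{\mathbf{1},\xi,\eta\}\le 2$ precisely when those vectors are dependent. The delicate point in the independent case is to confirm that the $3\times 3$ system is never identically zero, so that a $3$-dimensional kernel (equivalently $r_2=2k-2$) is genuinely excluded; the persistent $1$ in the $\alpha$-coefficient of the second equation is exactly what guarantees this, and verifying it uniformly across all $X,Y$ is where I expect the bookkeeping to concentrate.
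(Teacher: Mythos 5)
Your proof is correct, and its opening move coincides with the paper's: the paper likewise reduces the lemma to showing that $r_2(K_{2k+1}\triangle B)=2k$ for \emph{every} biclique $B$, since deleting one biclique from a minimum odd cover leaves an odd cover of $K_{2k+1}\triangle B$, to which Proposition~\ref{prop:ranklower} applies. Where you genuinely diverge is the rank computation itself. The paper argues combinatorially: any two vertices in the same partite set of $B$, or any two vertices outside $B$, are adjacent twins in $K_{2k+1}\triangle B$, so if some partite set of $B$ has even size one finds $k$ vertex-disjoint adjacent-twin pairs and applies Lemma~\ref{lemma:RankByAdjacentTwins}, while if both partite sets have odd size one finds $k-1$ such pairs whose removal leaves a $P_3$, of $2$-rank $2$; no parity-of-rank input is needed. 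You instead analyze $A=J+I+\xi\eta^{\top}+\eta\xi^{\top}$ directly, trap $\ker A$ in $\operatorname{span}\{\mathbf{1},\xi,\eta\}$, and bound the kernel by the $3\times 3$ system, which checks out: with $a\equiv |X|$ and $b\equiv |Y| \pmod{2}$, and using $\xi\cdot\eta=0$ and $\mathbf{1}\cdot\mathbf{1}=1$, the conditions read $a\alpha+b\beta=0$, $b\sigma+\alpha+b\beta=0$, and $a\sigma+a\alpha+\beta=0$, so the coefficient matrix is indeed never zero (in fact its solution space is exactly one-dimensional in all four cases of $(a,b)$, so the bookkeeping you anticipated is lighter than feared), and the even-rank fact~\cite{brouwer1992} then pins $\dim\ker A=1$, i.e., $r_2(K_{2k+1}\triangle B)=2k$. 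Notably, yours is precisely the alternative the paper gestures at in the sentence after the lemma (``we can also prove this lemma by analyzing the adjacency matrix directly, similar to what we did for odd cycles''), and the paper's odd-cycle argument uses the same parity boost from~\cite{brouwer1992}. As for what each approach buys: the paper's route recycles Lemma~\ref{lemma:RankByAdjacentTwins}, which it needs anyway for $r_2(K_{2n})$ and Lemma~\ref{lem:AdjacentTwins}, and avoids the even-rank theorem; yours treats all bicliques uniformly (no case split on the parities of $|X|$ and $|Y|$, only on whether $\mathbf{1},\xi,\eta$ are independent), at the cost of making the parity step load-bearing --- as you correctly stress, $\dim\ker A\le 2$ alone would only give $r_2\ge 2k-1$, which is insufficient.
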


\begin{proof}
Let $G$ be an arbitrary biclique, which is a subgraph of $K_{2k+1}$. It suffices to show that $r_2(K_{2k+1}\triangle G)=2k$. Let $A$ and $B$ be the two parts of $G$. If the size of one of $A$ and $B$ is even, then it is not hard to find $k$ vertex-disjoint pairs of adjacent twins in $K_{2k+1}\triangle G$, and hence, by Lemma~\ref{lemma:RankByAdjacentTwins}, $r_2(K_{2k+1}\triangle G)=2k$ as desired. If both $A$ and $B$ have odd size, then we can find a matching $M$ consisting of $k-1$ vertex-disjoint pairs of adjacent twins, such that $(K_{2k+1}\triangle G)- V(M)$ is a path of length $2$, which has $2$-rank $2$. Therefore, by Lemma~\ref{lemma:RankByAdjacentTwins}, we have $r_2(K_{2k+1}\triangle G)=2k-2+2=2k$.
\end{proof}

In fact, we can also prove this lemma by analyzing the adjacency matrix directly, similar to what we did for odd cycles.

When $n$ is an even integer, we can find an odd cover using at most $n/2+1$ bipartite graphs for a family of $n$-vertex graphs which contains the complete graph $K_n$. 

\begin{lemma}\label{lem:AdjacentTwins}
Let $G$ be a graph on $n$ vertices. If $G$ contains a perfect matching $M$ such that each edge $uv \in M$ is a pair of adjacent twins, then $r_2(G)=n$ and $b_2(G)\le n/2+1$. 
\end{lemma}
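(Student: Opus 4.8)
The rank statement is immediate from the previous lemma. Since $M$ is a perfect matching we have $V(M)=V(G)$, so $G-V(M)$ is the empty graph and $r_2(G-V(M))=0$; applying Lemma~\ref{lemma:RankByAdjacentTwins} gives $r_2(G)=2|M|=n$. It remains to produce an odd cover of $G$ with at most $n/2+1$ bicliques. Write $m=n/2$, let the matching edges be $\{a_i,b_i\}$ for $i\in[m]$, so that $N[a_i]=N[b_i]$ for each $i$, and let $H$ be the quotient graph on $[m]$ obtained by contracting each pair. Then $G$ is the lexicographic product of $H$ with $K_2$: pairs $i$ and $j$ induce a $K_{2,2}$ exactly when $ij\in E(H)$ and an empty graph otherwise, and each $a_ib_i$ is an edge.

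The first step is to realize $A(G)$ as a product $M(\oplus_1^m H_2)M^T$ as in equation~\eqref{eq:MoplusMt} using only $m$ coordinate pairs, which is possible by Proposition~\ref{prop:friedland} since $r_2(G)=n$ and $A(G)$ has zero diagonal. The twin structure lets me write such an $M$ down explicitly: fix an ordering of $[m]$ and work in $\mathbb{F}_2^{2m}$ with the alternating form defined by equation~\eqref{eq:MoplusMt}, whose standard basis I denote $p_1,q_1,\dots,p_m,q_m$, so that $\langle p_s,q_s\rangle=1$ and all other basis pairings vanish. I assign to $a_i$ the vector $p_i+\sum_{j>i,\ ij\in E(H)}q_j$ and to $b_i$ the vector obtained by adding $q_i$ to that of $a_i$. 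A direct check of the form shows $\langle a_i,b_i\rangle=1$, and that for $i\neq j$ the pairing of any vector in $\{a_i,b_i\}$ with any vector in $\{a_j,b_j\}$ equals $1$ precisely when $ij\in E(H)$; hence this $M$ yields $A(G)$. Reading the $m$ coordinate pairs back as tricliques (as in the discussion preceding Definition~\ref{defn:Tk}) produces tricliques $T_i=(\{a_i\},Y_i,\{b_i\})$ with $Y_i=\{a_j,b_j:j\in N_H(i),\ j<i\}$, whose symmetric difference of edge sets is $E(G)$. This is already a triclique odd cover of size $m=n/2$.

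The final and hardest step is to convert this size-$m$ triclique cover into a \emph{biclique} odd cover of size only $m+1$. A triclique $(X,Y,Z)$ is the symmetric difference of the bicliques $(X,Y\cup Z)$ and $(Y,Z)$, so the naive conversion costs $2m$ bicliques; the entire difficulty is to pay only one extra biclique rather than $m$. Note that $T_1$ is degenerate (here $Y_1=\emptyset$), already the single biclique $(\{a_1\},\{b_1\})$. The plan is to split each $T_i$ into the between-pair biclique $(\{a_i,b_i\},Y_i)$ together with the lone matching edge $a_ib_i$, so that the bicliques $(\{a_i,b_i\},Y_i)$ account for all edges between distinct pairs while the $m$ matching edges must be absorbed \emph{collectively}. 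This collective absorption is the main obstacle: the perfect matching $\{a_ib_i\}$ on its own has minimum odd cover of size $m$, so the saving cannot come from treating the matching separately but must exploit the nested structure of the sets $Y_i$ (each involving only earlier pairs) to fold all but one matching edge into the between-pair bicliques. Equivalently, in the language of equation~\eqref{eq:MoplusMt}, one must re-realize $A(G)$ by an $M$ whose $(1,1)$ coordinate pairs are confined to a single coordinate, turning that one coordinate into a triclique (hence two bicliques) and leaving the other $m-1$ honest bicliques, for a total of $m+1$; the parity obstruction to eliminating that last coordinate entirely is an Arf-invariant-type invariant, which is exactly what forces the additive $+1$ and prevents $b_2(G)=n/2$ in general. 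Carrying out this absorption explicitly---or, alternatively, invoking Lemma~\ref{lem:nooddoddextension} to introduce the pairs while keeping the relevant odd/odd condition under control---is the crux of the argument.
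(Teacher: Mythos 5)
Your rank computation is exactly the paper's: with $M$ perfect, Lemma~\ref{lemma:RankByAdjacentTwins} gives $r_2(G)=2|M|=n$ immediately, and that half is fine. Your triclique setup is also correct: the vectors $a_i\mapsto p_i+\sum_{j>i,\,ij\in E(H)}q_j$, $b_i\mapsto a_i+q_i$ do realize $A(G)=M(\oplus_1^m H_2)M^T$ (the pairing checks you sketch go through), and reading the coordinate pairs back yields a valid cover of $G$ by $m=n/2$ tricliques $(\{a_i\},Y_i,\{b_i\})$. But the lemma claims an upper bound on $b_2$, i.e.\ a \emph{biclique} cover, and your proposal stops precisely at the step that constitutes the entire content of the proof: converting the $m$ tricliques into $m+1$ bicliques. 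You name this conversion ``the crux of the argument'' and describe what it would have to accomplish (absorb the $m$ matching edges collectively, or confine all $(1,1)$ coordinate pairs to a single coordinate), but you never exhibit a construction or an argument that it can be done. As written, your decomposition of each $T_i$ into $(\{a_i,b_i\},Y_i)$ plus the edge $a_ib_i$ costs $2m-1$ bicliques, and nothing in the proposal closes the distance to $m+1$. That is a genuine gap, not a routine verification: the paper spends essentially its whole proof on exactly this point, via an explicit induction building vectors $a^{(t)},b^{(t)}$ in $\{0,1,\blank\}^{j+1}$ (for $j$ odd) or $\{0,1,\blank\}^{j+2}$ (for $j$ even) whose induced subgraph in $B_{j+1}$ resp.\ $B_{j+2}$ is $G^{(j)}$, with $b^{(t)}$ the coordinatewise complement of $a^{(t)}$, a correction coordinate $a^{(t)}_{j+1}$ chosen to fix parities, and---in the even case---a final column kept identically $0$ on the $a$'s so that the last two bicliques merge into one, bringing $n/2+2$ down to $n/2+1$. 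Nothing of this kind appears in your writeup.

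One further inaccuracy worth flagging: your closing remark that an Arf-invariant-type obstruction ``forces the additive $+1$ and prevents $b_2(G)=n/2$ in general'' cannot be right as stated, since $K_{8k}$ satisfies the hypothesis of the lemma (every pair of vertices in a complete graph is a pair of adjacent twins) and the paper proves $b_2(K_{8k})=4k=n/2$ in Theorem~\ref{clique}. Whatever invariant distinguishes the $n/2$ and $n/2+1$ cases, it is not a uniform obstruction attached to the hypothesis of this lemma, so even the heuristic framing of your missing step needs repair before you attempt to fill it in.
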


\begin{proof}
By Lemma~\ref{lemma:RankByAdjacentTwins}, we have $r_2(G)=n$. We can show $b_2(G)\le n/2+1$ by construction. Let $M=\{a_ib_i\}_{1\le i\le n/2}$. Let $G^{(j)}$ be the induced subgraph of $G$ on vertices $\{a_1,b_1,a_2,b_2, \dots,a_j,b_j\}$. We prove the following statement by induction: For any positive integer $j$, if $j$ is odd, then there exists a set of vectors $\{a^{(1)}, b^{(1)},a^{(2)},b^{(2)},\dots,a^{(j)},b^{(j)}\}\subset \{0,1,\blank\}^{j+1}$ with the following properties.
\begin{enumerate}
    \item The induced subgraph of $B_{j+1}$ (See Definition~\ref{defn:Bk}) on this set of vectors is isomorphic to $G^{(j)}$.
    \item $a^{(t)}_i+b^{(t)}_i=1$ for all $1\le t\le j$, $i\neq t$. 
    \item $a^{(t)}_i=b^{(t)}_i=\blank$ if and only if $i=t$.
\end{enumerate}
If $j$ is even, then there exists a set of vectors $\{a^{(1)}, b^{(1)},a^{(2)},b^{(2)},\dots,a^{(j)},b^{(j)}\}\subset \{0,1,\blank\}^{j+2}$ with the following properties.
\begin{enumerate}
    \item The induced subgraph of $B_{j+2}$ (See Definition~\ref{defn:Bk}) on this set of vectors is isomorphic to $G^{(j)}$.
    \item $a^{(t)}_i+b^{(t)}_i=1$ for all $1\le t\le j$, $i\neq t$.
    \item $a^{(t)}_i=b^{(t)}_i=\blank$ if and only if $i=t$.
    \item $a^{(t)}_{j+2}=0$ for all $1\le t\le j$.
\end{enumerate}
The $0,1$ entries of the vectors are considered as numbers in modulo $2$. Let $I(i,j)$ be the indicator function of whether $a_ia_j$ forms an edge, that is,
$$
I(i,j)=
\left\{
\begin{array}{l}
     1,\quad \text{if $a_ia_j\in E(G)$,}  \\
     0,\quad \text{otherwise.} 
\end{array}
\right.
$$
When $j=1$, let $a^{(1)}=(\blank,0)$, and $b^{(1)}=(\blank,1)$.  When $j=2$, let $a^{(1)}=(\blank, 0, 0, 0)$, $b^{(1)}=(\blank, 1, 1, 1)$,  $a^{(2)}=(0,\blank, I(1,2),0)$, and $b^{(2)}=(1,\blank, 1-I(1,2), 1)$. It is easy to check that they satisfy the desired properties. Suppose for some integer $k\ge 1$, we have constructed a set of vectors $\{a^{(1)}, b^{(1)},a^{(2)},b^{(2)},\dots,a^{(2k)},b^{(2k)}\}\in \{\blank,0,1\}^{2k+2}$ with the desired properties.

To prove the statement for $j=2k+1$, let
\[
S_1(a^{(t)})=\sum_{\substack{1\le i\le 2k\\i\neq t}}a_i^{(t)},
\]
for $1\le t\le 2k$. Create a new vector $a^{(2k+1)}\in\{0,1,\blank\}^{2k+2}$. Set
\[
\begin{aligned}
&a^{(2k+1)}_{2k+1}=\blank,\\
&a^{(2k+1)}_i=S_1(a^{(i)})+I(i,2k+1),~~~ \text{for}\ 1\le i\le 2k,\\
&a^{(2k+1)}_{2k+2}=\sum_{i=1}^{2k}a^{(2k+1)}_i.
\end{aligned}
\]
Let $b^{(2k+1)}$ be the complement of $a^{(2k+1)}$; that is, replace 1 with 0 and 0 with 1. We can now check that 
\[
\{a^{(1)}, b^{(1)},a^{(2)},b^{(2)},\dots,a^{(2k+1)},b^{(2k+1)}\}\in \{\blank,0,1\}^{2k+2}
\]
is a set of vectors with desired properties. It suffices to check that for each $1\le j\le 2k$, the number of coordinates such that one is $0$ and the other is $1$ is $I(j,2k+1)$ between $a^{(j)}$ and $a^{(2k+1)}$, which is equivalent to
\[
\sum_{\substack{1\le i\le 2k+2\\i\neq j,i\neq 2k+1}}(a_i^{(j)}+a_i^{(2k+1)})=I(j,2k+1).
\]

To prove the statement for $j=2k+2$, let
\[
S_2(a^{(t)})=\sum_{\substack{1\le i\le 2k+1\\i\neq t}}a_i^{(t)},
\]
for $1\le t\le 2k+1$. Give each vector $a^{(i)}$ two extra $0$'s at the end, and give each $b^{(i)}$ two extra $1$'s at the end, for $1\le i\le 2k+1$. Create a new vector $a^{(2k+2)}\in\{0,1,\blank\}^{2k+4}$. Set
\[
\begin{aligned}
&a^{(2k+2)}_{2k+2}=\blank,\\
&a^{(2k+2)}_i=S_2(a^{(i)})+I(i,2k+2),~~~ \text{for}\ 1\le i\le 2k+1,\\
&a^{(2k+2)}_{2k+3}=\sum_{i=1}^{2k+1}a^{(2k+2)}_i,\\
&a^{(2k+2)}_{2k+4}=0.
\end{aligned}
\]
Let $b^{(2k+2)}$ be the complement of $a^{(2k+2)}$. We can now check that 
\[
\{a^{(1)}, b^{(1)},a^{(2)},b^{(2)},\dots,a^{(2k+2)},b^{(2k+2)}\}\in \{\blank,0,1\}^{2k+4}
\]
is a set of vectors with desired properties. This completes the proof of the statement.

When $n/2$ is odd, the statement above for odd values provides us with an odd cover of $G$ using $n/2+1$ bicliques. This implies $b_2(G)\le n/2+1$. On the other hand, when $n/2$ is even, the statement above for even values only provides us with an odd cover of $G$ using $n/2+2$ bicliques. However, we notice that the union of the two bicliques corresponding to the last two columns is also a biclique. Therefore, this also implies $b_2(G)\le n/2+1$. 
\end{proof}

Now we are ready to prove our main result in this section.
\begin{theorem}\label{clique}
For any positive integer $n$,
\[
\left\lceil\frac{n}{2}\right\rceil \leq b_2(K_n) \leq \left\lceil\frac{n}{2}\right\rceil + 1.
\]
In particular, for any positive integer $k$
\[
\begin{aligned}
&b_2(K_{8k-1})=b_2(K_{8k})=4k,\\
&b_2(K_{8k+1})=4k+1.
\end{aligned}
\]
\end{theorem}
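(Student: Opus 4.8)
The plan is to prove the two bounds separately and then pin down the exact value for the three residue classes $n \in \{8k-1, 8k, 8k+1\}$. For the general lower bound $b_2(K_n) \geq \lceil n/2 \rceil$, I would split on parity. When $n = 2k+1$ is odd, this is exactly Lemma~\ref{lem:oddKnlower}, giving $b_2(K_{2k+1}) \geq k+1 = \lceil n/2 \rceil$. When $n = 2k$ is even, $K_n$ contains a perfect matching of adjacent twins (every pair of vertices satisfies $N[u] = N[v]$ in a complete graph), so by Lemma~\ref{lemma:RankByAdjacentTwins} we get $r_2(K_{2k}) = 2k$, and Proposition~\ref{prop:ranklower} yields $b_2(K_{2k}) \geq k = \lceil n/2 \rceil$. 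For the general upper bound $b_2(K_n) \leq \lceil n/2 \rceil + 1$, I would again split on parity and use Lemma~\ref{lem:AdjacentTwins}: for $n$ even, $K_n$ has a perfect matching of adjacent twins, so $b_2(K_n) \leq n/2 + 1$ directly. For $n$ odd, I would add a single vertex to reduce to the even case, or apply the even construction to $K_{n+1}$ and delete a vertex, obtaining $b_2(K_n) \leq (n+1)/2 + 1 = \lceil n/2 \rceil + 1$; alternatively one constructs the cover on $K_{n-1}$ and attaches the last vertex via one extra biclique $(\{v\}, V \setminus \{v\})$.

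The heart of the theorem is the exact determination for the three special residue classes, where the upper bound must be improved to meet the lower bound. The lower bounds are immediate: for $n = 8k-1 = 2(4k-1)+1$ and $n = 8k+1 = 2(4k)+1$, Lemma~\ref{lem:oddKnlower} gives $b_2(K_{8k-1}) \geq 4k$ and $b_2(K_{8k+1}) \geq 4k+1$, while for $n = 8k$ the even-case argument above gives $b_2(K_{8k}) \geq 4k$. So the entire content of the exact results lies in matching \emph{upper} bounds: I must construct odd covers of $K_{8k}$ and $K_{8k-1}$ using exactly $4k$ bicliques, and of $K_{8k+1}$ using $4k+1$. The natural engine for this is Lemma~\ref{lem:AdjacentTwins}, whose proof shows $b_2(G) \leq n/2 + 1$ for any $G$ on $n$ vertices with a perfect matching of adjacent twins; for $K_{8k}$ this only gives $4k+1$, so the $+1$ must be shaved off. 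The key is the final observation in that lemma's proof: when $n/2$ is even, the construction produces a cover of size $n/2 + 2$ in which the two bicliques coming from the last two coordinates \emph{merge into a single biclique}, recovering $n/2 + 1$. For $n = 8k$ we have $n/2 = 4k$ even, and the hope is that the residue condition $n \equiv 0 \pmod 8$ forces an \emph{additional} merge or cancellation among the constructed bicliques, dropping the count from $4k+1$ to $4k$.

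I expect the main obstacle to be exactly this extra saving: exhibiting the explicit odd cover of $K_{8k}$ (and correspondingly $K_{8k \pm 1}$) of size $\lceil n/2 \rceil$ rather than $\lceil n/2 \rceil + 1$. My plan is to work in the incidence-vector/matrix framework of equation~\eqref{eq:MoplusMt}, where an odd cover of cardinality $\ell$ corresponds to an $n \times 2\ell$ matrix $M$ with no row-pair equal to $(1,1)$ satisfying $M(\oplus_1^\ell H_2)M^T = A(K_n) = J - I$ over $\mathbb{F}_2$. Since $r_2(K_{8k}) = 8k$, Proposition~\ref{prop:friedland} guarantees a rank-$8k$ matrix $M$ with $2\ell = 8k$ columns realizing $J-I$ via tricliques; the task is to choose the symplectic basis so that the associated word assignment avoids the forbidden $(1,1)$ pair in \emph{every} coordinate, i.e.\ uses only genuine bicliques rather than tricliques. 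Concretely I would look for a set of $8k$ vectors in $\{0,1,\blank\}^{4k}$ that induces $K_{8k}$ in $B_{4k}$ (via Definition~\ref{defn:Bk}), meaning any two words differ in an odd number of non-$\blank$ coordinates — this is a coding-theoretic / design condition, and the specific arithmetic of $8k, 8k\pm 1$ (likely tied to the existence of suitable binary codes or the divisibility that makes all-ones-type sums vanish mod $2$) is what makes these residues special. The analogous constructions for $8k-1$ and $8k+1$ should follow by deleting or adjoining one vertex to the $K_{8k}$ construction, with the parity of the adjustment handled as in the general upper-bound argument.
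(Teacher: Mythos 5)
Your assembly of the routine parts is correct and matches the paper exactly: the lower bound $b_2(K_{2k+1})\ge k+1$ is Lemma~\ref{lem:oddKnlower}, the even lower bound follows from $r_2(K_{2k})=2k$ (Lemma~\ref{lemma:RankByAdjacentTwins}) together with Proposition~\ref{prop:ranklower}, the general upper bound comes from Lemma~\ref{lem:AdjacentTwins} plus a one-vertex adjustment, and you correctly reduce all three exact values to the single claim $b_2(K_{8k})\le 4k$ (deleting a vertex handles $8k-1$, one extra biclique $(\{v\},V\setminus\{v\})$ handles $8k+1$). You even identify the right language: a $4k$-biclique odd cover of $K_{8k}$ is the same as $8k$ words in $\{0,1,\blank\}^{4k}$ inducing $K_{8k}$ in $B_{4k}$, which is precisely the paper's framework. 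But the proposal stops exactly where the theorem's content begins. The construction of such a word set is the entire substance of the exact determinations, and you leave it as a ``plan'' with two speculative mechanisms, neither of which works as stated. The hoped-for ``additional merge'' inside the Lemma~\ref{lem:AdjacentTwins} construction is a dead end: that inductive construction is built for arbitrary graphs with a perfect matching of adjacent twins, its cost is pinned at $n/2+1$ (the single merge when $n/2$ is even only recovers the $+2$ back to $+1$), and nothing in it is sensitive to $n\bmod 8$, so no second cancellation is available. Likewise, Proposition~\ref{prop:friedland} only guarantees a symplectic representation $A(K_{8k})=M(\oplus_1^{4k}H_2)M^T$, i.e.\ a decomposition into $4k$ \emph{tricliques}; arranging the $(1,1)$ pairs out of every row of $M$ is exactly the difficulty to be solved, not a tool for solving it, and you give no argument for why the residue class $8k$ makes this possible.

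What the paper actually does at this point is write down the words explicitly: vectors $a^{(i)}\in\{0,1,\blank\}^{4k}$ with $a^{(i)}_i=\blank$, with $a^{(i)}_j=0$ for $j\ge i+2$ and at one near-diagonal position governed by $i\bmod 4$, and $a^{(i)}_j=1$ otherwise, together with their complements $b^{(i)}$. Verification is then made finite by three structural observations: each pair $\{a^{(i)},b^{(i)}\}$ is an edge; $a^{(i)}$ and $b^{(i)}$ are adjacent to the same remaining vertices (so complements behave as adjacent twins); and adjacency is invariant under the shift $i\mapsto i+4$, so everything reduces to checking that $a^{(1)},a^{(2)},a^{(3)},a^{(4)}$ induce a $K_4$. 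The mod-$4$ periodicity of this pattern, doubled by complementation, is where the modulus $8$ genuinely enters --- not through a merging trick or a generic symplectic basis. Your proposal correctly isolates the obstacle and the right encoding, but without exhibiting and verifying such a construction it does not prove the theorem; the gap is the theorem.
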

\begin{proof}
Lemma~\ref{lemma:RankByAdjacentTwins} gives $r_2(K_{2n})=2n$ and hence $b_2(K_{2n})\ge n$. Together with Lemma~\ref{lem:oddKnlower}, we have the general lower bounds 
\[
b_2(K_n)\ge \left\lceil\frac{n}{2}\right\rceil.
\]
For upper bounds, Lemma~\ref{lem:AdjacentTwins} gives $b_2(K_{2n})\le n+1$. Since $b_2(K_{2n+1})\le b_2(K_{2n})+1\le n+2$, we have the general upper bounds 
\[b_2(K_n)\le \left\lceil\frac{n}{2}\right\rceil+1.\] 

For $n=8k$, it suffices to show that $b_2(K_{8k})\le 4k$. Note that this also implies $b_2(K_{8k-1})\le 4k$ and $b_2(K_{8k+1})\le 4k+1$.  We will construct explicit odd covers of $K_{8k}$ represented by vectors as vertices in $B_{4k}$ (See Definition~\ref{defn:Bk}, also see Figure~\ref{fig:K_32} for the construction of an odd cover of $K_{32}$). Let $\{a^{(i)}\}_{1\le i\le 4k}\subset\{0,1,\blank\}^{4k}$ be a set of vectors defined by:
\begin{enumerate}
    \item $a^{(i)}_i=\blank$;
    \item $a^{(i)}_{j}=0$ if $\left\{\begin{array}{l}
    j\ge i+2 \\
    i\equiv 0,1\mod 4 \text{ and } j=i+1 \\
    i\equiv 0,3\mod 4 \text{ and } j=i-1
    \end{array}\right.$;
    \item $a^{(i)}_{j}=1$ otherwise.
\end{enumerate}

\begin{figure}[h]
    \centering
    \includegraphics[scale=0.2]{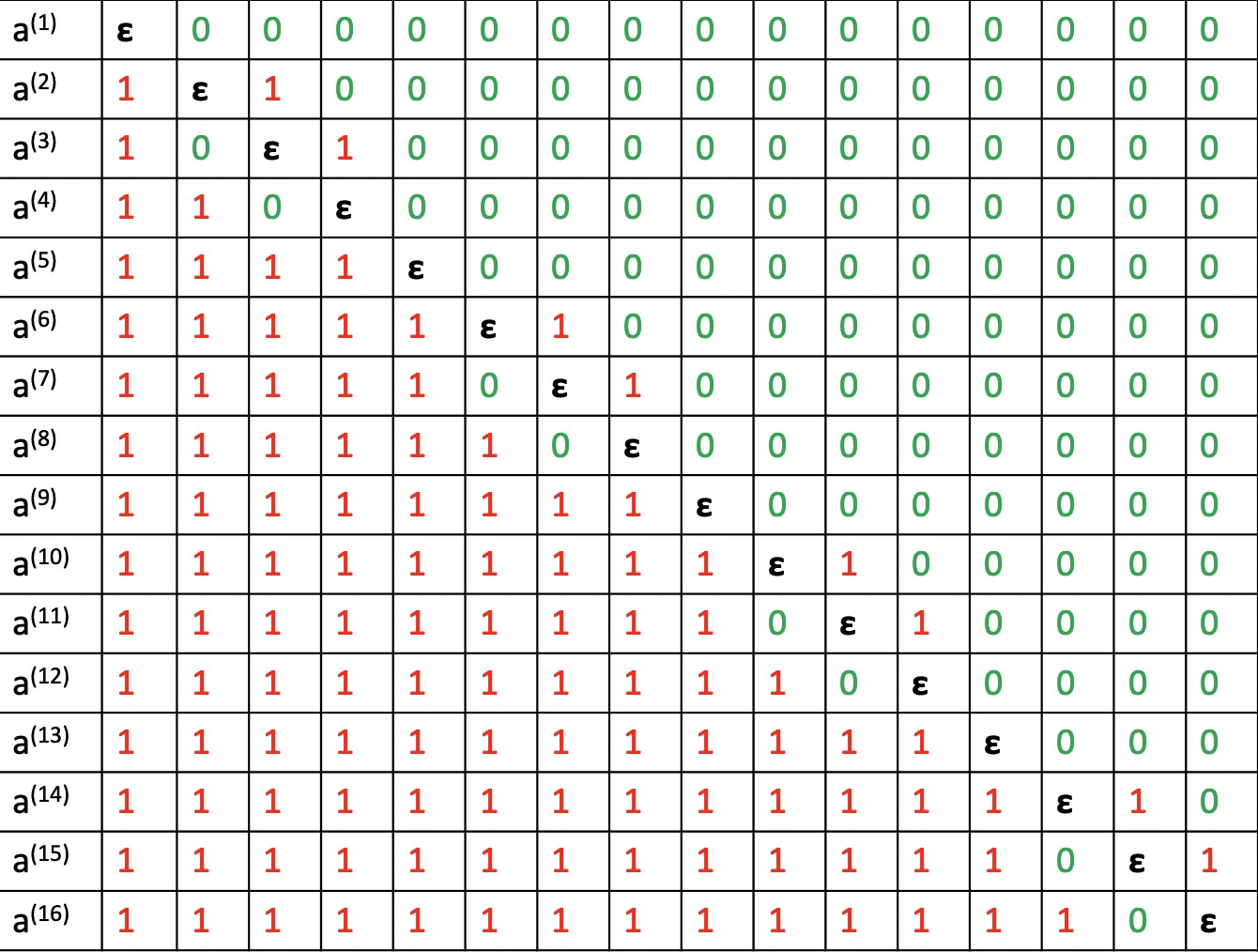}
    \caption{Vectors $a^{(i)}$ for $K_{32}$}
    \label{fig:K_32}
\end{figure}

Also let $\{b^{(i)}\}_{1\le i\le 4k}$ be a set of vectors such that $b^{(i)}$ is the complement of $a^{(i)}$ for all $i$ (replace each $1$ by $0$ and each $0$ by $1$). Let $S$ denote the set of these $8k$ vectors. We can check that $S$ gives an odd cover of $K_{8k}$. Indeed, one can check that the following properties hold:
\begin{enumerate}
    \item $\{a^{(i)},b^{(i)}\}$ form an edge for all $1\le i\le 4k$.
    \item $\{a^{(i)},v\}$ form an edge if and only if $\{b^{(i)},v\}$ form an edge for all $v\in S\setminus\{a_i,b_i\}$, $1\le i\le 4k$.
    \item $\{a^{(i)},v\}$ form an edge if and only if $\{a^{(i+4)},v\}$ for all $v\in S\setminus\{a^{(i)},a^{(i+4)}\}$ $1\le i\le 4k-4$.
\end{enumerate}
Since $a^{(1)},a^{(2)},a^{(3)}$ and $a^{(4)}$ form a $K_4$, the proof is complete.
\end{proof}

\section{Conclusion}
\label{conclusion}
\begin{itemize}[leftmargin=*]
\item In this paper, we determined that $b_2(K_n)=\lceil n/2 \rceil$ when $n \in \{0,1,7\} \pmod{8}$. As a result, one might hope that the behavior of $b_2(K_n)$ depends on the value of $n \pmod{8}$. Our computer-aided calculations when $n \leq 14$ together with Theorem~\ref{clique} determine the values of $b_2(K_n)$ for $1\leq n \leq 17$. Although these computer-aided calculations yield that $b_2(K_{10}) = 6 = \lceil 10/2 \rceil +1$ whereas $b_2(K_{2}) = 1 = \lceil 2/2 \rceil$, we believe this behavior is unique to the case when $n=2$:

\begin{conj}\label{conj:evenvalues}
Let $n \geq 2$ be an integer. Then 
\[ b_2(K_{2n}) = 
\begin{cases}
n \hspace{3mm} \text{when} \hspace{3mm}  4|n \\
n +1 \hspace{3mm}  \text{otherwise} \hspace{3mm} 
\end{cases}
\]
\end{conj}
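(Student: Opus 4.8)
The plan is to reduce the conjecture to a single lower bound and then attack that bound with the quadratic‑form encoding of equation~\eqref{eq:MoplusMt}. From Lemma~\ref{lem:AdjacentTwins} (applied with any perfect matching of $K_{2n}$, whose endpoints are automatically adjacent twins) we already have $b_2(K_{2n}) \le n+1$, and Theorem~\ref{clique} gives $b_2(K_{2n}) = n$ exactly when $2n = 8k$, i.e.\ when $4 \mid n$. Thus the conjecture follows as soon as one establishes the lower bound $b_2(K_{2n}) \ge n+1$ for every $n \ge 2$ with $n \equiv 1,2,3 \pmod 4$; equivalently, that $K_{2n}$ has no odd cover of cardinality exactly $n$ in these residue classes. (The hypothesis $n \ge 2$ excludes the anomaly $b_2(K_2) = 1$.)

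Next I would set up the encoding. Suppose toward a contradiction that $\{(X_1,Y_1),\dots,(X_n,Y_n)\}$ is an odd cover of $K_{2n}$ of cardinality $n$, and let $w_1,\dots,w_{2n} \in \mathbb{F}_2^{2n}$ be the incidence vectors of its vertices as in equation~\eqref{eq:MoplusMt}. Writing $B(u,v) = u\,(\oplus_1^n H_2)\,v^T$, the Gram matrix $\bigl(B(w_i,w_j)\bigr)_{i,j}$ equals $A(K_{2n}) = J + I$ over $\mathbb{F}_2$, where $J$ is the all‑ones matrix and $I$ the identity. Since $r_2(K_{2n}) = 2n$ (Lemma~\ref{lemma:RankByAdjacentTwins}), this Gram matrix is nondegenerate, so $w_1,\dots,w_{2n}$ form a basis of $\mathbb{F}_2^{2n}$. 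The form $B$ is the polarization of the quadratic form $Q(u) = \sum_{s=1}^n x_s y_s$, which is an orthogonal sum of $n$ hyperbolic planes and is therefore of plus type with $\mathrm{Arf}(Q) = 0$. Crucially, the biclique condition that no coordinate pair of any $w_i$ equals $(1,1)$ forces $Q(w_i) = 0$ for every $i$.

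The core of the argument is to recompute $\mathrm{Arf}(Q)$ in the basis $\{w_i\}$. Using $Q(u+v) = Q(u) + Q(v) + B(u,v)$ together with $Q(w_i) = 0$ and $B(w_i,w_j) = 1$ for $i \ne j$, one finds $Q\!\left(\sum_i c_i w_i\right) = \sum_{i<j} c_i c_j = \binom{|c|}{2} \pmod 2$, where $|c|$ denotes the Hamming weight of $(c_1,\dots,c_{2n})$. A standard roots‑of‑unity count of binomial coefficients modulo $4$ then shows that $Q$ attains the value $1$ strictly more often than $0$ precisely when $n \equiv 2,3 \pmod 4$, so $Q$ is of minus type (Arf $1$) in exactly those classes. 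This contradicts $\mathrm{Arf}(Q) = 0$, ruling out an $n$‑biclique cover and yielding $b_2(K_{2n}) = n+1$ for $n \equiv 2,3 \pmod 4$. (As a sanity check, the case $n = 2$ recovers $b_2(K_4) = 3$.)

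The main obstacle is the residue $n \equiv 1 \pmod 4$. Here $\binom{|c|}{2}$ produces a form of plus type, so the Arf invariant is consistent with $Q = Q^{+}$ and gives no contradiction; the parity condition $Q(w_i) = 0$ alone is satisfiable. The obstruction must therefore be extracted from the full \emph{componentwise} biclique constraint — that every coordinate pair lies in $\{(0,0),(1,0),(0,1)\}$, which is strictly stronger than $Q(w_i)=0$. My plan would be to seek a secondary, integer‑ or $\mathbb{Z}_4$‑valued invariant refining the mod‑$2$ identity $\sum_s |X_s|\,|Y_s| \equiv \binom{2n}{2}\pmod 2$, for instance by tracking the exact counts of the symbols $0$, $1$, and $\blank$ appearing in each coordinate and how they interact with the oddness of every pairwise separation, and then showing this refined quantity is incompatible with cardinality $n$ exactly when $n \equiv 1 \pmod 4$. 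Producing such an invariant and verifying it obstructs the construction is the step I expect to be genuinely hard, and is presumably why this case remains recorded as a conjecture.
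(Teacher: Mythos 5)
You should first note that the statement you set out to prove is recorded in the paper as a \emph{conjecture}: the paper itself proves only the two ingredients you correctly reuse, namely the upper bound $b_2(K_{2n})\le n+1$ (Lemma~\ref{lem:AdjacentTwins}) and the equality $b_2(K_{8k})=4k$ (Theorem~\ref{clique}), so there is no proof of the full statement to compare against. Within that context, your Arf-invariant argument for $n\equiv 2,3\pmod 4$ checks out and goes strictly beyond the paper. The Gram matrix of the vectors $w_1,\dots,w_{2n}$ under $B$ is $A(K_{2n})=J+I$, which is invertible over $\mathbb{F}_2$ in even dimension, so the $w_i$ are indeed a basis; the biclique condition (no coordinate pair equal to $(1,1)$) forces $Q(w_i)=0$; polarization then gives $Q\bigl(\sum_i c_iw_i\bigr)=\sum_{i<j}c_ic_j=\binom{|c|}{2}\bmod 2$. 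Counting zeros, which is basis-independent, one gets $N_0 = 2^{2n-1}+\tfrac12\,\mathrm{Re}\bigl[(1-i)(1+i)^{2n}\bigr] = 2^{2n-1}\pm 2^{n-1}$, with the minus sign exactly when $n\equiv 2,3\pmod 4$; this contradicts the plus-type count $2^{2n-1}+2^{n-1}$ of the hyperbolic form $Q=\sum_s x_sy_s$, so no odd cover of $K_{2n}$ of cardinality $n$ exists in those residue classes. I verified each of these steps, including your sanity check $b_2(K_4)=3$, and found no error; this is a genuine partial result toward Conjecture~\ref{conj:evenvalues} that the paper does not contain.

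The genuine gap is the case $n\equiv 1\pmod 4$ (equivalently $2n\equiv 2\pmod 8$), whose first instance $b_2(K_{10})=6$ is precisely the paper's computer-verified data point, and here your proposal contains a research plan rather than an argument. Your own diagnosis shows the gap is not a matter of sharpening the same computation: for $n\equiv 1\pmod 4$ the form $\binom{|c|}{2}\bmod 2$ is again of plus type, hence equivalent to $Q$, so there genuinely exists a basis of isotropic vectors with pairwise $B$-value $1$ --- every mod-$2$ consequence of the Gram conditions together with $Q(w_i)=0$ is satisfiable, and any quadratic-form invariant over $\mathbb{F}_2$ is provably blind to this case. The obstruction, if it exists, must exploit the componentwise constraint that \emph{no} pair $(x_{i,s},y_{i,s})$ equals $(1,1)$, which is strictly stronger than the isotropy condition $\sum_s x_{i,s}y_{i,s}=0$ (the latter only forbids an odd number of $(1,1)$ pairs per vertex). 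The proposed refinement --- a $\mathbb{Z}_4$-valued invariant tracking symbol counts per coordinate --- is never constructed, and no candidate identity is exhibited that distinguishes $n\equiv 1$ from $n\equiv 0\pmod 4$; since the conjecture asserts opposite conclusions in these two classes, producing such a quantity is the entire difficulty. So the proposal should be assessed as: a correct reduction, a correct and novel resolution of two of the three open residue classes, and an honest but unfilled gap in the third, leaving the statement a conjecture still.
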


It is worth noting the connection between Conjecture~\ref{conj:evenvalues}, the existence of Hadamard matrices, and the connection between Hadamard matrices and $b_2(K_n)$ established in~\cite{radhakrishnan2000}. An $n \times n$ Hadamard matrix is conjectured to exist (when $n \geq 3$) if and only if $4|n$. Accepting this conjecture and using the implication from~\cite{radhakrishnan2000}, we get that $b_2(K_{2n}) = n$ when $4|n$. Conjecture~\ref{conj:evenvalues} implies this condition is both necessary and sufficient; namely, if there does not an $n \times n$ Hadamard matrix, then $b_2(K_{2n})=n+1$. However, our proof that $b_2(K_{8n})=4n$ ({\em i.e.}, the first line of Conjecture~\ref{conj:evenvalues}) is independent of the existence of Hadamard matrices, and as such it may be possible to prove Conjecture~\ref{conj:evenvalues} with new methods also unrelated to Hadamard matrices. In the case for odd cliques, Theorem~\ref{clique} establishes that $b_2(K_{2n-1}) = n$ when $2n-1 = \pm 1 \pmod{8}$, and we conjecture this also holds when $2n-1 \in \{3,5\} \pmod{8}$:

\begin{conj}\label{conj:oddvalues}
Let $n \geq 2$ be an integer. Then 
\[ b_2(K_{2n-1}) = n  \]
\end{conj}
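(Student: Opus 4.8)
The plan is to prove Conjecture~\ref{conj:oddvalues}, namely $b_2(K_{2n-1}) = n$ for all $n \geq 2$, by closing the remaining gap between the lower bound and the upper bound. Lemma~\ref{lem:oddKnlower} already gives $b_2(K_{2n-1}) \geq n$, so the entire content of the conjecture lies in establishing the matching upper bound $b_2(K_{2n-1}) \leq n$. Since Theorem~\ref{clique} handles the cases $2n-1 \equiv \pm 1 \pmod 8$, I only need a construction when $2n-1 \equiv 3$ or $5 \pmod 8$, i.e.\ when $n \equiv 2$ or $3 \pmod 4$.

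First I would try to leverage the existing $K_{8k}$ construction from the proof of Theorem~\ref{clique}. That proof produces, via the vectors $\{a^{(i)}, b^{(i)}\}$ in $B_{4k}$, an odd cover of $K_{8k}$ using exactly $4k$ bicliques, and by deleting one vertex it already yields $b_2(K_{8k-1}) \leq 4k$. The natural approach is to delete vertices one at a time and track how the minimum odd cover changes; concretely I would use Lemma~\ref{lem:nooddoddextension} in reverse, or rather re-examine which vertices can be removed from the $B_{4k}$ realization without forcing an extra biclique. Because $K_m$ is obtained from $K_{m+1}$ by deleting a vertex, and deleting a vertex can only lower $b_2$ by at most the amount needed to repair parity, the key question is whether the $4k$-biclique cover of $K_{8k}$ restricts to a $4k$-biclique odd cover of $K_{8k-j}$ for the relevant residues. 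The parity obstruction from Lemma~\ref{lem:oddKnlower} (odd cliques forcing a biclique with both parts odd) suggests that a direct restriction will not always drop the count below $4k$, so I expect to need a genuinely new construction for the residues $n \equiv 2,3 \pmod 4$.

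The most promising systematic route mirrors Lemma~\ref{lem:AdjacentTwins}: build, by induction on $n$, a set of $2n-1$ vectors in $\{0,1,\blank\}^{n}$ whose induced subgraph in $B_n$ is $K_{2n-1}$ and which uses only $n$ coordinates. The inductive step should add one new vertex (rather than an adjacent-twin pair as in Lemma~\ref{lem:AdjacentTwins}, since $2n-1$ is odd) by appending a single coordinate and solving, over $\F_2$, a system of linear constraints forcing the new vertex to be adjacent to all $2(n-1)-1$ previous vertices while keeping every coordinate pair legal (never $(1,1)$, never both $\blank$ except in the designated slot). The bookkeeping would track, for each existing vertex $a^{(t)}$, a running coordinate sum $S(a^{(t)})$ exactly as in the proof of Lemma~\ref{lem:AdjacentTwins}, and the new coordinate would be chosen to fix global parity. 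I would verify the three structural properties (legality of pairs, correct adjacencies, and the single-$\blank$-per-vertex condition) are preserved, invoking the fact from \cite{brouwer1992} that the $2$-rank of an adjacency matrix is even to rule out off-by-one parity failures.

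The hard part will be the parity obstruction that Lemma~\ref{lem:oddKnlower} encodes: for odd cliques, any odd cover must contain a biclique whose two parts are \emph{both} of odd size, and it is precisely this constraint that makes $b_2(K_{2n-1}) = n$ rather than $n-1$ and that blocks a naive reduction from the even case. The inductive construction must therefore carefully arrange a single ``odd--odd'' biclique while all others are ``even--something,'' and ensuring that this arrangement extends cleanly from the $n-1$ case to the $n$ case — without the number of bicliques creeping up to $n+1$ as it threatens to in the $n/2$-even subcase of Lemma~\ref{lem:AdjacentTwins} — is where the delicate linear algebra over $\F_2$ lives. I anticipate that, as in Lemma~\ref{lem:AdjacentTwins}, the residues $n \equiv 2,3 \pmod 4$ will require a merging trick (combining the bicliques associated to the last one or two coordinates into a single biclique) to shave the count back down to $n$; identifying the right merge and proving it remains a valid biclique is the crux of the argument.
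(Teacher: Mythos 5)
There is a fundamental mismatch here: the statement you are trying to prove is Conjecture~\ref{conj:oddvalues}, which the paper leaves \emph{open}. The paper proves only the cases $2n-1 \equiv \pm 1 \pmod 8$ (Theorem~\ref{clique}) and explicitly conjectures, without proof, the remaining residues $2n-1 \equiv 3, 5 \pmod 8$. So there is no paper proof to match, and your proposal, as written, does not close the gap either: it is a research plan, not an argument. You correctly reduce the problem to the upper bound $b_2(K_{2n-1}) \leq n$ for $n \equiv 2, 3 \pmod 4$, and you correctly locate the central difficulty (any odd cover of an odd clique must contain a biclique with both parts of odd size, since $K_{2n-1}$ has an odd number of edges). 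But at every point where the actual mathematics would have to happen --- the existence of the vectors in the inductive step, the solvability of the $\mathbb{F}_2$ system of adjacency constraints, the ``merging trick'' --- you say you ``expect,'' ``anticipate,'' or would ``verify,'' and no construction is exhibited. Identifying the obstruction is not the same as overcoming it; the paper's authors identified exactly the same obstruction and could not overcome it, which is why this is a conjecture.

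Two concrete defects in the plan itself. First, the bookkeeping of your induction is off: passing from $K_{2(n-1)-1} = K_{2n-3}$ to $K_{2n-1}$ requires adding \emph{two} vertices per appended coordinate, not one as you state; if you add one vertex at a time you pass through the even clique $K_{2n-2}$, and there the extension machinery genuinely breaks. Indeed, for $v$ the last vertex, $r_2(K_{2n-1}) = r_2(K_{2n-2}) = 2n-2$, and solving $(J-I)x = \mathbf{1}$ over $\mathbb{F}_2$ shows the \emph{only} admissible set $S$ in Lemma~\ref{lem:nooddoddextension} is $S = V(K_{2n-2})$, the full vertex set. When $n$ is even, $K_{2n-2}$ has an odd number of edges, so \emph{every} odd cover of it contains a biclique with both parts of odd size, and the hypothesis of Lemma~\ref{lem:nooddoddextension} fails for the unique available $S$ --- so the one tool you cite for vertex-by-vertex extension is provably inapplicable in half the cases you need. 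Second, your appeal to the even-rank fact from \cite{brouwer1992} ``to rule out off-by-one parity failures'' is not an argument: that fact enters the paper's \emph{lower}-bound proofs (Theorem~\ref{cycle-odd}), and nothing analogous produces an upper-bound construction. If you want to attack this conjecture, the honest starting point is to attempt an explicit $B_n$-realization of $K_{2n-1}$ for the first open case ($n=6$, $K_{11}$ with $6$ bicliques --- note the paper's computer search settled $b_2(K_n)$ only through $n \leq 17$ via $n\le 14$ plus Theorem~\ref{clique}, and $K_{11}$ with $b_2(K_{11})=6$ is within that verified range, so the genuinely open territory begins at $K_{19}$) and look for a pattern that generalizes, rather than hoping the $K_{8k}$ construction degrades gracefully.
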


\item We also explored the minimum cardinality of an odd cover of $G$ for a variety of different types of graphs. For all $n$ vertex graphs that we consider, it follows that $b_2(G) \leq n/2 +1$. Although this may be too bold of a phenomenon to hold in general, it would be interesting to show that the minimum cardinality of an odd cover of an $n$-vertex graph is asymptotically bounded away from $n$: 

\begin{prob}\label{prob:awayfrom1}
Do there exist $\epsilon>0$ and $n_0 \in \mathbb{N}$ with the following property: for all $n>n_0$ and every $n$-vertex graph $G$, $b_2(G) \leq (1-\epsilon)n$?
\end{prob}

Perhaps a step towards an affirmative answer to Problem~\ref{prob:awayfrom1} is to consider the minimum cardinality of an odd cover of the random graph $G_{n,1/2}$. Letting $\alpha(G)$ denote the largest size of an independent set in $G$, and by taking a collection of appropriately chosen stars, it follows that $b_2(G) \leq n-\alpha(G)$ and similarly $\bp(G) \leq n-\alpha(G)$. The problem of biclique partitions of the random graph $G_{n,1/2}$ is a well-studied problem where the best current bounds are due to Alon, Bohman, and Huang~\cite{abh} who recently showed that there exists an absolute constant $c > 0$ so that $ \bp(G_{n, 1/2}) \leq n - (1+c) \alpha(G_{n,1/2})$ with high probability.

\item 
Our main source of lower bounds on $b_2(G)$ comes from utilizing the $2$-rank and subadditivity of matrix rank. We also can explore a hypergraph extension of odd covers. Let $K_n^{(r)}$ denote the complete $r$-uniform hypergraph on vertex set $[n]:=\{1,2, \ldots, n\}$. Given pairwise disjoint $X_1, \ldots, X_r \subset [n]$, let $\prod_{i=1}^r X_i$ denote the complete $r$-partite $r$-uniform hypergraph whose hyperedges consist of all $r$-sets which contain exactly one vertex from each $X_i$. Let $b_r(K_n^{(r)})$ denote the \textit{minimum} number of complete $r$-uniform $r$-partite hypergraphs needed so that each edge of $K_n^{(r)}$ appears in an \textit{odd number} of these complete $r$-uniform $r$-partite hypergraphs. A natural upper bound on $b_r(K_n^{(r)})$ comes from the minimum number of complete $r$-uniform $r$-partite hypergraphs needed to partition $K_n^{(r)}$, which we denote as $\bp_r(K_n^{(r)})$.  

\medskip 
Taking an arbitrary odd cover of $K_n^{(2r)}$ and using the methods of Cioab\u{a}, K\"{u}ngden, and Verstra\"{e}te~\cite{CKV}, we recover an odd cover of the \textit{Kneser graph} $K_{n:r}$, {\em i.e.}, $V(K_{n:r}) = \binom{[n]}{r}$ and $(A,B) \in E(K_{n:r})$ if and only if $A \cap B = \emptyset$. Using an argument of Wilson~\cite{W} on the $2$-rank of Kneser graphs as in~\cite{OV} together with the best general bounds on $\bp_r(K_n^{(r)})$ due to Leader, Mili\'{c}evi\'{c}, and Tan~\cite{LMT}, we recover 
\begin{equation}\label{eq:hypergraphcliquesymdiff}
\bigg( \frac{1}{ \binom{2r}{r}} + o(1) \bigg) \binom{n}{r} \leq b_{2r}(K_n^{(2r)}) \leq \bigg(\frac{14}{15} + o(1) \bigg) \binom{n}{r}.    
\end{equation}

In this paper, we showed that $b_2(K_n) < \bp_2(K_n)=n-1$  for $n\geq 5$, and it would be interesting to show the two quantities differ for larger uniformities. 

\begin{prob}\label{prob:hypergraphanalog}
For $r \geq 3$, is it the case that $b_r(K_n^{(r)}) < \bp_r(K_n^{(r)})$ for $n$ sufficiently large? 
\end{prob}

\end{itemize}

\section{Acknowledgements}

The authors would like to thank Daniela Ferrero and Kate Lorenzen for helpful conversations. Part of this research was conducted at the Graduate Research Workshop in Combinatorics 2021. Alexander Clifton was partially supported by NSF award DMS-1945200. Jiaxi Nie and Jason O'Neill were partially supported by NSF award DMS-1800332. Mei Yin was partially supported by the University of Denver's Faculty Research Fund 84688-145601.

\bibliographystyle{plain}

\end{document}